\documentclass[12pt, reqno]{amsart}

\usepackage{amsmath}
\usepackage{amsthm}
\usepackage{amsfonts}
\usepackage{amssymb}
\usepackage{verbatim}
\usepackage{graphicx}
\usepackage[margin=1.0in]{geometry}
\usepackage{url}
\usepackage{enumerate}
\usepackage[pdftex,bookmarks=true]{hyperref}
\usepackage[normalem]{ulem}
\usepackage[usenames, dvipsnames]{color}
 
\theoremstyle{plain}
  \newtheorem{theorem}{Theorem}[section]

  \newtheorem{lemma}[theorem]{Lemma}

  \newtheorem{claim}[theorem]{Claim}

\theoremstyle{definition}

  \newcommand\C{{\mathbb{C}}}
  \newcommand\N{{\mathbb{N}}}
  
  \newcommand\R{{\mathbb{R}}}
\renewcommand\P{{\mathbf{P}}}
\newcommand\E{{\mathbf{E}}}  
\newcommand\Var{\mathbf{Var}}

\renewcommand\Im{{\operatorname{Im}}}
\renewcommand\Re{{\operatorname{Re}}}
\newcommand{\ep}{\varepsilon}

\newcommand{\del}{\delta}
\newcommand{\per}{\text{per}}

  \newcommand\CH{{\mathcal{H}}}

\begin{document}

 \title{Hole radii for the Kac polynomials and derivatives}
 \author{Hoi H. Nguyen and Oanh Nguyen}
 \address{Department of Mathematics\\ The Ohio State University \\ 231 W 18th Ave \\ Columbus, OH 43210 USA}
 \email{nguyen.1261@osu.edu}
 \address{Division of Applied Mathematics\\ Brown University\\  Providence, RI 02906, USA}
 \email{oanh\_nguyen1@brown.edu}
 \thanks{H.N. is supported by NSF CAREER grant DMS 1752345, O.N. is supported by NSF grants DMS–1954174 and DMS–2246575. This work was initiated under the SQuaREs 2021 program of AIM, we thank the Institute for the generous support.}

\begin{abstract}
The Kac polynomial
$$f_n(x) = \sum_{i=0}^{n} \xi_i x^i$$ with independent coefficients of variance 1 is one of the most studied models of random polynomials.

It is well-known that the empirical measure of the roots converges to the uniform measure on the unit disk. On the other hand, at any point on the unit disk, there is a hole in which there are no roots, with high probability. In a beautiful work \cite{michelen2020real}, Michelen showed that the holes at $\pm 1$ are of order $1/n$. We show that in fact, all the hole radii are of the same order. The same phenomenon is established for the derivatives of the Kac polynomial as well.
\end{abstract}
 
  \maketitle
  \section{Introduction}

Approximation by roots of polynomials of coefficients $\{-1,0,1\}$ is a classical and interesting topic in analysis, with fascinating pictures and conjectures. For instance, it follows from a result of Borwein and Pinner \cite[Theorem 1]{BoPi} that for any given $\zeta$ of $d$-th root of unity, the distance from it to any root $z$ of $\{-1,0,1\}$ polynomials of degree $n$ which vanish at $\zeta$ of order at most $k$ (i.e. $f^{(k+1)}(\zeta)\neq 0$) can be bounded by 
$$|z-\zeta| \ge e^{-1} \frac{(k!)^{\lceil \phi(d)/2\rceil }}{(n+1)^{ (k+1)\lceil \phi(d)/2\rceil+1 }},$$ 
where $\phi(d)$ is the usual Euler phi-function. This result is asymptotically optimal. On the other hand, the smallest distance can be sub-exponentially small if $\zeta$ is on the unit circle and not a root of unity (such as when $\zeta$ is an algebraic number of small Mahler measure), see for instance \cite[Corollary 1, Theorem 3]{BoPi}. The situation at 1 is also interesting, it was shown from the same paper \cite[Corollary 4, Theorem 6]{BoPi} that $|1-z| \ge  \frac{1}{n^{k+2}}$ (which is again near optimal) for any real roots $z$ of $\{-1,0,1\}$ polynomials of degree $n$ which vanish at $1$ of order exactly $k$. Note that the distance is significantly larger if $z$ is purely complex. See Figure \ref{fig:BP}. We also refer the reader to \cite{BoPi,CKW, OP} and the references therein for further interesting discussions and problems. 
\begin{figure}[h!] \label{fig:BP}
	\includegraphics[width=.5\textwidth]{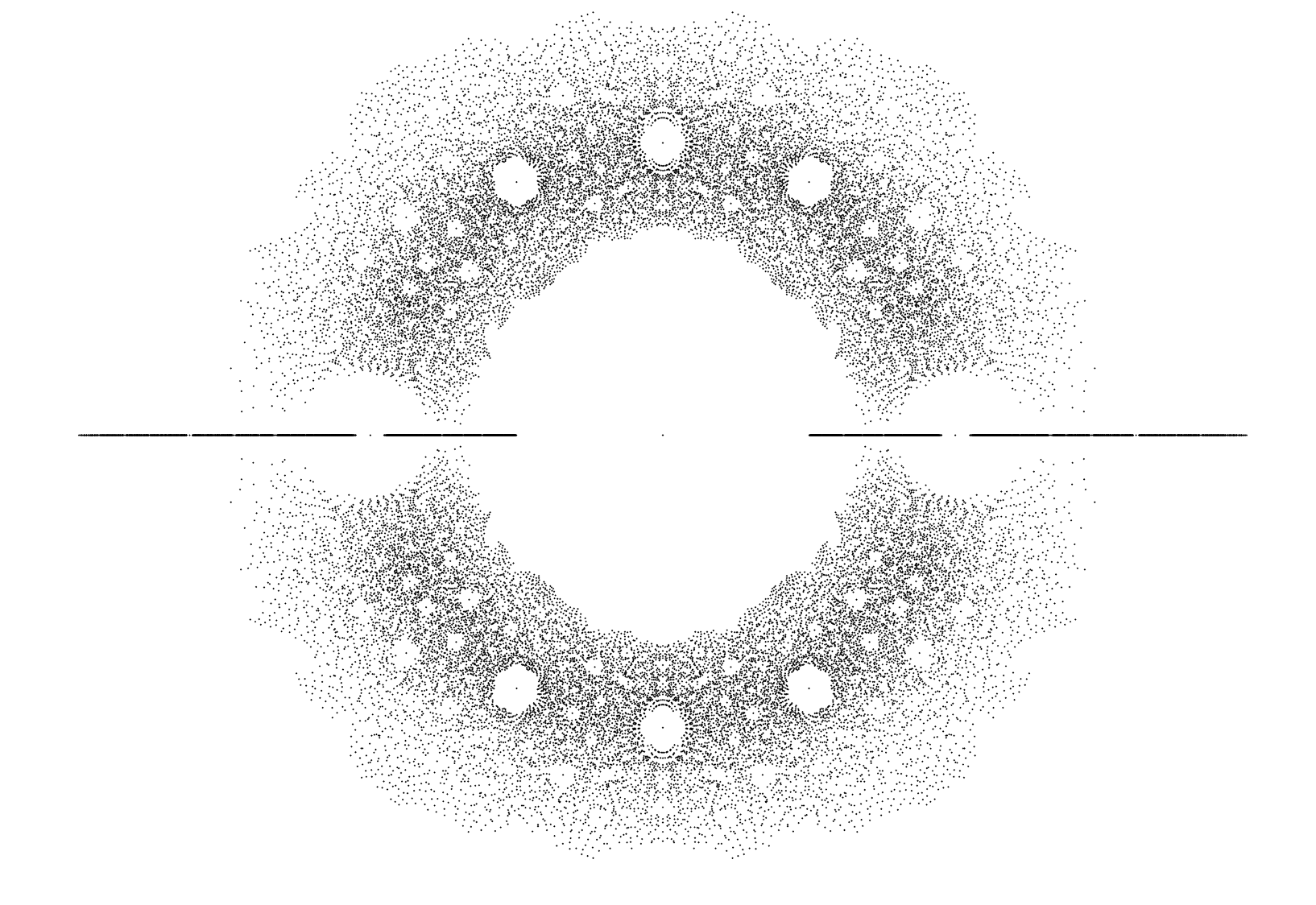} 
	\caption{Zeros of all polynomials with $\pm 1$ coefficients and degree at most eight (\cite{BoPi})}
\end{figure}
\vspace{5mm}

Our goal in this note is to study the distances from some probabilistic viewpoint. More generally, consider the Kac polynomial
$$f_{0, n}(x) = \sum _{i=0}^{n} \xi_i x^{i}$$
where $\xi_i$ are independent (not necessarily identically distributed), real-valued random variables with mean 0 and variance 1. 

\vspace{5mm}

For this random polynomial, it is well-known that the empirical distribution of the roots converges to the uniform distribution on the unit circle (\cite{kabluchko2014asymptotic}). So, the roots concentrate near the unit circle. And in particular, the real roots concentrate near $\pm 1$. However, precisely at $\pm 1$, there are holes that do not contain any roots. 
It was conjectured by Shepp and Vabderbei \cite{shepp1995complex}, and confirmed recently by Michelen \cite[Theorem 1.2]{michelen2020real} that the typical distance of {\it real roots} to 1 for random Kac polynomial is of order $O(1/n)$.

\begin{theorem}\label{thm:hole1} Let $\xi_i$ be iid with mean zero and variance one. For any constant $\delta>0$, there exists a constant $C>0$ so that 
$$\P(\mbox{there exists a real root in $[1-C/n, 1+C/n]$}) \ge 1-\delta$$ 
for all $n$ sufficiently large.
\end{theorem}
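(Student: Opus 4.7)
The plan is to prove the theorem via a rescaling that maps the window $[1 - C/n, 1 + C/n]$ to a fixed compact interval $[-C, C]$, show that the rescaled polynomial converges weakly to an explicit nondegenerate Gaussian process, and then prove that this limit has sign changes on $[-C, C]$ with probability close to $1$ for large $C$. Concretely, define
$$X_n(s) := \frac{1}{\sqrt{n}}\, f_{0,n}(1 + s/n), \qquad s \in [-C, C],$$
so that real zeros of $X_n$ in $(-C, C)$ correspond bijectively to real roots of $f_{0,n}$ in $(1 - C/n, 1 + C/n)$. The argument splits into three steps: (i) show $X_n \Rightarrow X$ as random elements of $C([-C, C])$, where $X$ is a centered Gaussian process with an explicit covariance $K$; (ii) show that $X$ has a strict sign change on $(-C, C)$ with probability at least $1 - \delta/2$ once $C = C(\delta)$ is taken large; (iii) transfer the sign-change event from $X$ to $X_n$ via the Portmanteau theorem, using that the set of continuous functions with a strict sign change on an open interval is open in the uniform topology on $C([-C,C])$.

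For step (i), a direct calculation using $\E \xi_i = 0$ and $\E \xi_i^2 = 1$ gives
$$\E\bigl[X_n(s)\, X_n(t)\bigr] = \frac{1}{n}\sum_{i=0}^{n} \bigl[(1 + s/n)(1 + t/n)\bigr]^i \;\longrightarrow\; K(s, t) := \int_0^1 e^{u(s+t)}\, du = \frac{e^{s+t} - 1}{s + t},$$
uniformly for $(s,t) \in [-C, C]^2$. Finite-dimensional convergence $X_n \to X$, where $X$ is the centered Gaussian process of covariance $K$, follows via Cram\'er--Wold from the Lindeberg--Feller CLT applied to the triangular array $\bigl(n^{-1/2}\, \xi_i (1 + s/n)^i\bigr)_{i=0}^n$; a convenient representation of the limit is $X(s) = \int_0^1 e^{us}\, dB(u)$ with $B$ a standard Brownian motion, which makes $X$ a.s.\ real analytic and nondegenerate. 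For tightness in $C([-C,C])$, the elementary estimate $|(1 + s/n)^i - (1 + t/n)^i| \le i(1 + C/n)^{i-1} |s-t|/n$ yields $\E\bigl[(X_n(s) - X_n(t))^2\bigr] \le C'_C\, (s-t)^2$ with $C'_C$ independent of $n$, and the Kolmogorov--Chentsov criterion (with second-moment exponent $\gamma = 2$ and increment exponent $1 + \beta = 2$) then gives tightness using only the unit-variance hypothesis.

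For step (ii), I would apply the Kac--Rice formula to $X$: a direct computation from $K$ yields a positive density $\rho(s)$ for the expected zero count, with $\rho(s) \asymp 1/|s|$ at infinity, so $\int_{-C}^{C} \rho(s)\, ds \to \infty$ logarithmically in $C$. Combined with a second-moment bound on the zero count---or, equivalently, a decorrelation argument using that the normalized correlation $K(s,t)/\sqrt{K(s,s)\, K(t,t)}$ tends to $0$ when $s$ and $t$ are suitably separated, so that the signs of $X$ at a sparse set of sample points become asymptotically independent---this yields $\P(X \text{ has no zero in } [-C, C]) \to 0$ as $C \to \infty$. Step (iii) is then immediate from weak convergence.

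The main obstacle, in my view, is step (ii). A soft symmetry argument only yields $\P(X(C)\, X(-C) < 0) \to 1/2$, because the correlation $K(C, -C)/\sqrt{K(C,C)\, K(-C, -C)} = C/\sinh C$ tends to $0^+$, so the signs at the endpoints alone give probability $1/2$ rather than $1 - \delta$; one genuinely needs the structure of $K$ (most transparently through its Brownian representation) together with a second-moment control on the zero count in order to push the probability close to $1$. The tightness and CLT components of step (i) are standard once the covariance formula is in hand.
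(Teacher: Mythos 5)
Your high-level strategy --- rescale to $X_n(s) = n^{-1/2} f_{0,n}(1+s/n)$, pass to the centered Gaussian limit $X$ with covariance $K(s,t) = \int_0^1 e^{u(s+t)}\,du$, and then force a sign change of $X$ by sampling at widely separated points where the normalized correlation of $X$ decays --- is exactly the route taken in the paper (which in turn follows Michelen). The normalized-correlation computation $K(C,-C)/\sqrt{K(C,C)K(-C,-C)} = C/\sinh C$ is correct, and your instinct that this decay is the engine of the proof is right. Two remarks are in order, one on efficiency and one on a genuine gap.

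On efficiency: you route the transfer step (iii) through weak convergence in $C([-C,C])$, hence need tightness via Kolmogorov--Chentsov, plus a Portmanteau argument. The paper avoids this. Since the event ``$X_n$ changes sign among finitely many fixed sample points $x_1,\dots,x_M$'' is already a finite-dimensional event, convergence of the random vector $(X_n(x_1),\dots,X_n(x_M))$ to $(X(x_1),\dots,X(x_M))$ --- which is the Lindeberg--Feller CLT alone --- suffices to transfer a lower bound on the probability of a sign change from $X$ back to $X_n$. Tightness and function-space topology are not needed here, and omitting them shortens the argument considerably.

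On the gap: step (ii) is where the real work is, and you stop at the level of a heuristic. ``The signs at a sparse set of points become asymptotically independent'' is not something that small pairwise correlation gives you for free, and you offer no lemma in its place. What the paper actually invokes is a quantitative Gaussian anti-concentration statement (Michelen's Lemma 6): there is a constant $\gamma > 0$ such that any centered Gaussian vector $(Z_1,\dots,Z_K)$ with $\E Z_i^2 = 1$ and $|\E Z_i Z_j| \le \gamma$ for $i\neq j$ satisfies $\P(\text{all } Z_i \text{ have the same sign}) \le 2^{-K+2}$. Combined with geometrically spaced sample points $x_i = \alpha^{i-1}$ (for which a direct computation shows the normalized correlation between $X(x)$ and $X(\alpha x)$ is $o_{\alpha\to\infty}(1)$, so $\alpha$ can be chosen so the hypothesis $\le \gamma$ holds), this gives $\P(\text{no sign change}) \le 2^{-K+2}$, which is made $\le \delta$ by taking $K$ (hence $C = \alpha^{K-1}$) large. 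Without this lemma, or an explicitly carried-out Kac--Rice-plus-second-moment bound on the zero count of $X$, step (ii) remains a plan rather than a proof. (Your two proposed alternatives for step (ii) are not ``equivalent'': the second-moment/Kac--Rice route controls $\Var N$ against $(\E N)^2$ and is the tool the paper uses for $\zeta\neq\pm1$, while the sign-decorrelation lemma is the tool for $\zeta=\pm1$; either can be made to work, but they are different estimates with different bookkeeping.)
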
 
\vspace{5mm}
It is not hard to establish the lower bound and conclude that the hole at $1$ (and $-1$) has radius of order $\Theta(1/n)$.

How about the hole radius at other points on the unit circle? A recent result by Cook, Yakir, Zeitouni and the first author \cite{cook2023universality} (see also Michelen and Sahasrabudhe \cite{michelen2020random} for the Gaussian case) shows that the distance between the zero set of $f_{n, 0}$ and the unit circle is of order $\frac{1}{n^2}$. So, this is a lower bound for all hole radii. 

From the result in \cite{BoPi} and Figure 1, it is natural to predict that the hole radii exhibit different orders at different points. For instance, in Figure 2 (source \cite{Baez}) where all roots of polynomials with coefficients $\pm 1$ and degree at most 24, one can observe that there are largest holes at $\pm 1$, smaller holes possibly at the roots of unity, and barely visible holes at other points. In Figure 3, we draw sampled roots of random Kac polynomials with $\pm 1$ coefficients and degree $n=1000$. Note that as $n=1000$ is large compared to $24$,  the holes are no longer visible in the figure unless being zoomed in properly. However, the striking similarities between the two figures would suggest that the same observation would remain true for large $n$. In Figure 4, we display sampled roots of the first derivative polynomial with $\pm 1$ coefficients and degree $n=1000$ which has the same pattern as in Figure 3.

\begin{figure}[h!] \label{fig:baez}
	\includegraphics[width=.5\textwidth]{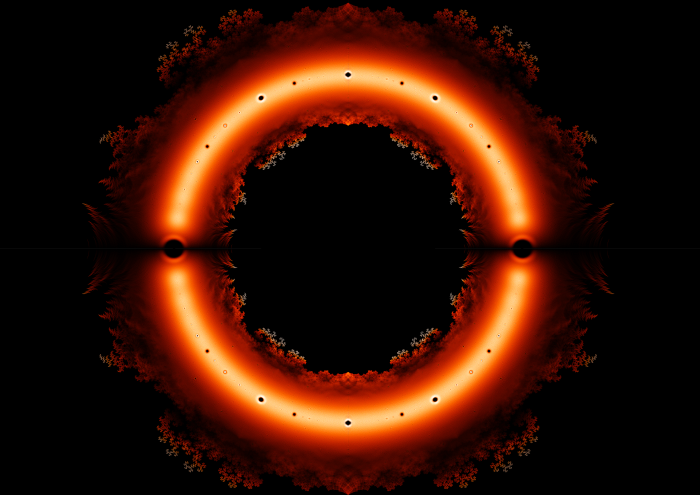} 
	\caption{All roots of polynomials with coefficients $\pm 1$ and degree at most 24 (source \cite{Baez})}
\end{figure}

\begin{figure}[h!]\label{fig:Kac}
	\includegraphics[width=.55\textwidth]{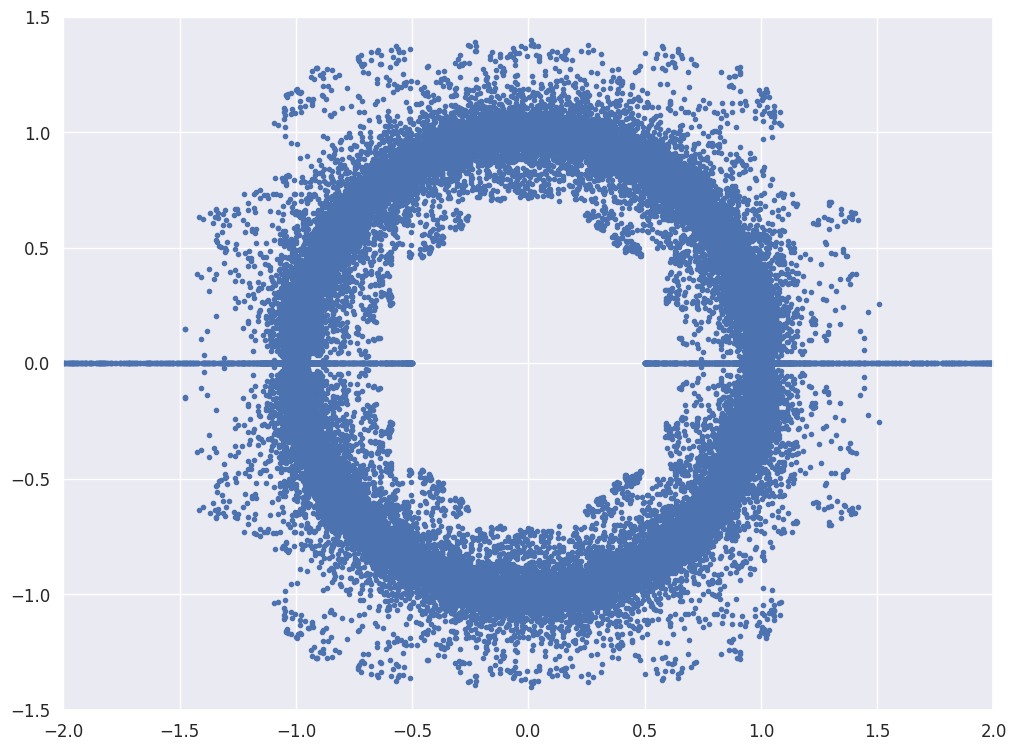}
	\caption{Sampled roots of the Kac polynomials with Rademacher coefficients}
\end{figure}

\begin{figure}[h!]\label{fig:derivative}
	\includegraphics[width=.55\textwidth]{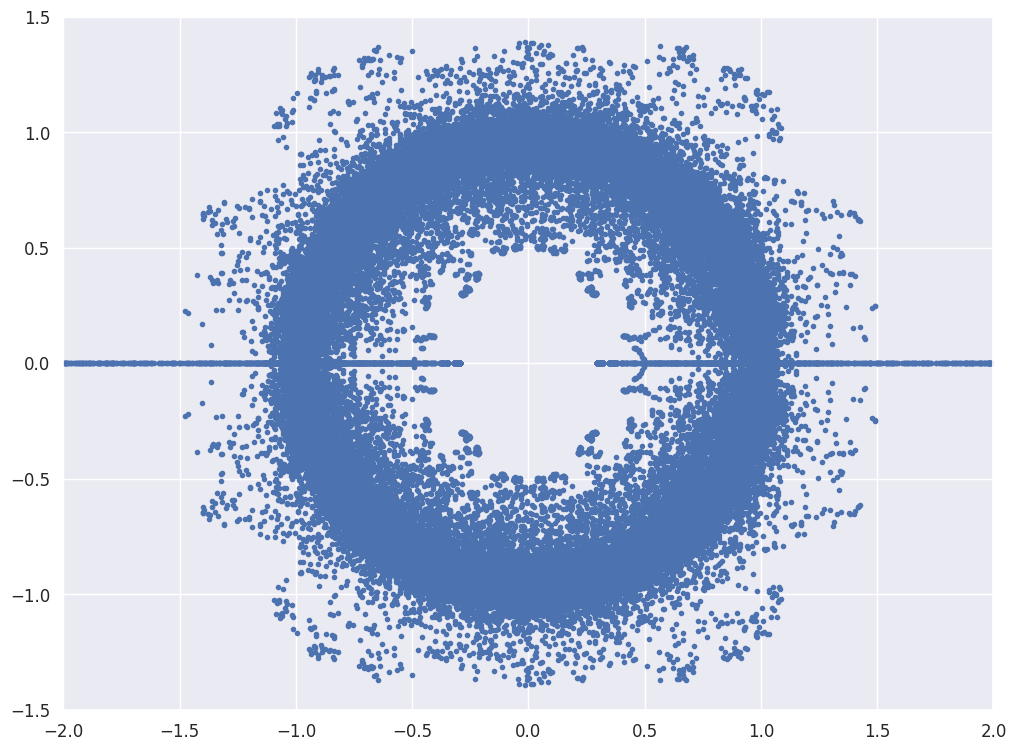}
	\caption{Sampled roots of the first derivative of Kac polynomials with Rademacher coefficients}
\end{figure}


Disproving this prediction, in this paper, we show that for every point $\zeta$ on the unit circle, the precise order of the hole at $\zeta$ is $\Theta(1/n)$. Moreover, we show that this holds also for the derivatives of the Kac polynomial. For a positive integer constant $\rho$, let us define
$$f_{\rho, n} = \sum_{i=0}^{n} a_{i, \rho, n} \xi_i z^i$$
where $a_{i, \rho, n} = (1+o_{n}(1)) i(i-1)\dots(i-\rho+1)\in \R$ with the $o_{n}(1)$ converges to $0$ uniformly in $i$, as $n\to\infty$ and $\rho$ fixed.
For any set $S\subset \C$, let $N_{f_{\rho, n} }(S)$ be the number of roots of $f_{\rho, n}$ in $S$. If $\rho=0$, we get the Kac polynomial. The $\rho$-th derivative of the Kac polynomial corresponds to $x^{-\rho}f_{\rho, n}(x)$. Here is our main result.
 \begin{theorem}  \label{thm:main}
Let $\rho$ be any positive integer constant. Assume that the random variables $\xi_i$ are independent with mean 0, variance 1 and bounded $(2+\ep_0)$-moment for some $\ep_0>0$. For every $\zeta\in S^{1}$, the radius of the hole at $\zeta$ is $\Omega(1/n)$. In particular, for every $\ep>0$, there exist positive constants $c_\rho$ and $C_\rho$ such that for all $\zeta\in S^{1}$		
\begin{equation}\label{eq:hole:upperbound}
\text{[Upper bound]}\hspace{20mm}	\P(N_{f_{\rho, n}}(B(\zeta, C_{\rho}/n))>0)\ge 1-\ep
	\end{equation}
	and 
	\begin{equation}\label{eq:hole:lowerbound}
\text{[Lower bound]} \hspace{20mm}	\P(N_{f_{\rho, n}}(B(\zeta, c_{\rho}/n))=0)\ge 1-\ep
	\end{equation}
	for sufficiently large $n$.  
\end{theorem}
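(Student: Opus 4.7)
The plan is to handle the two bounds separately. The lower bound \eqref{eq:hole:lowerbound} follows from anti-concentration at $\zeta$ combined with a standard derivative estimate, while the upper bound \eqref{eq:hole:upperbound} is where the real work lies and will be obtained by rescaling $f_{\rho, n}$ near $\zeta$ and passing to a limiting Gaussian analytic function.

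For the lower bound, any root $z_0 \in B(\zeta, r)$ of $f_{\rho, n}$ satisfies
\[
|f_{\rho, n}(\zeta)| \;=\; |f_{\rho, n}(\zeta) - f_{\rho, n}(z_0)| \;\le\; r \cdot \sup_{B(\zeta, r)} |f_{\rho, n}'|.
\]
A second-moment calculation gives $\sup_{B(\zeta, r)} |f_{\rho, n}'| \le A \cdot n^{\rho + 3/2}$ with probability $\ge 1 - \ep/2$ for $A = A(\ep)$ large, while a small-ball (anti-concentration) inequality for sums of independent $(2+\ep_0)$-moment variables applied to the real and imaginary parts of $f_{\rho, n}(\zeta)$ yields $|f_{\rho, n}(\zeta)| \ge a \cdot n^{\rho + 1/2}$ with probability $\ge 1 - \ep/2$ for $a = a(\ep)$ small. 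Taking $c_\rho := a/(2A)$ and $r = c_\rho/n$ makes these two estimates incompatible.

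For the upper bound, the key device is the rescaled polynomial
\[
g_n(w) \;:=\; n^{-(\rho + 1/2)} \, f_{\rho, n}\!\bigl(\zeta(1 + w/n)\bigr) \;=\; \sum_{k=0}^{n} \frac{Y_{k, n}}{k!}\, w^{k}, \qquad Y_{k, n} \;:=\; \frac{\zeta^{k} \, f_{\rho, n}^{(k)}(\zeta)}{n^{\,k + \rho + 1/2}},
\]
whose zeros in $B(0, C)$ correspond bijectively to zeros of $f_{\rho, n}$ in $B(\zeta, C/n)$; it therefore suffices to produce a zero of $g_n$ in $B(0, C_\rho)$ with probability $\ge 1 - \ep$ for $C_\rho = C_\rho(\ep)$ large. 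A direct variance calculation gives $\E|Y_{k, n}|^2 \asymp 1/(2k + 2\rho + 1)$ uniformly in $n$ and $\zeta \in S^1$, so the Taylor tail $k > K$ is negligible for $K$ large. Since no single $\xi_j$ contributes more than $O(n^{-1/2})$ to any $Y_{k, n}$, a Lindeberg-type multivariate CLT (using the $(2+\ep_0)$-moment hypothesis) yields joint convergence $(Y_{k, n})_{k \le K} \Rightarrow (Y_k)_{k \le K}$ to a centered Gaussian sequence, and a Riemann-sum identification realizes the limit as
\[
G_{\rho, \zeta}(w) \;=\; \int_0^1 t^{\rho} e^{tw} \, dM_\zeta(t),
\]
where $M_\zeta$ is the scaling limit of $t \mapsto n^{-1/2} \sum_{j \le tn} \xi_j \zeta^j$: a real Brownian motion when $\zeta \in \{\pm 1\}$ and a proper complex Brownian motion otherwise. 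In either case $G_{\rho, \zeta}$ is almost surely a non-constant analytic function with a discrete non-empty zero set, so $\P(G_{\rho, \zeta}\text{ has a zero in }B(0, C_\rho)) \ge 1 - \ep/2$ for $C_\rho$ large, and Hurwitz's theorem transfers the zero to $g_n$ for $n$ large.

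The main obstacle I anticipate is uniformity of the constants over $\zeta \in S^1$. Although the Hermitian covariance $\E G_{\rho, \zeta}(w) \overline{G_{\rho, \zeta}(w')} = \int_0^1 t^{2\rho} e^{t(w + \bar w')}\, dt$ is $\zeta$-independent, the pseudo-covariance $\E G_{\rho, \zeta}(w) G_{\rho, \zeta}(w')$ changes discontinuously at $\zeta = \pm 1$ (where $G_{\rho, \zeta}$ becomes real-valued on $\R$), and the CLT convergence rate for $g_n$ degrades like $(|1 - \zeta^2|\, n)^{-1}$ as $\zeta \to \pm 1$. I would handle this by a case split: for $|\zeta \mp 1| \gtrsim 1/n$ the complex-GAF analysis above applies with uniform constants; for $|\zeta \mp 1| \lesssim 1/n$, a real root of $f_{\rho, n}$ near $\pm 1$ produced by the appropriate $\rho$-analogue of Michelen's Theorem~\ref{thm:hole1} automatically lies within $O(1/n)$ of $\zeta$. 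The two ranges overlap, and together they give the uniform upper bound claimed in Theorem~\ref{thm:main}.
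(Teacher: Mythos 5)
Your lower bound is a genuinely different route from the paper's and, if the anti-concentration lemma is made precise, a cleaner one. The paper reduces to the Gaussian case via the universality theorem of \cite{DOV}, applies Kac--Rice to the limiting GAF $g_\infty$ to get $\E N_{g_\infty}(B_c)\ll c^2$, and finishes with Markov. Your argument avoids universality and Kac--Rice entirely: compare $|f_{\rho,n}(\zeta)|$, anti-concentrated at scale $n^{\rho+1/2}$, with $r\sup_{B(\zeta,r)}|f'_{\rho,n}|\ll r\,n^{\rho+3/2}$. The only technical point to be filled in is that the small-ball estimate is uniform in $\zeta\in S^1$, which with $(2+\ep_0)$-moments requires verifying a Lyapunov/Lindeberg condition for the real or imaginary part of $f_{\rho,n}(\zeta)$ whichever has variance $\gtrsim n^{2\rho+1}$; this goes through because the Lyapunov ratio decays like $n^{-\ep_0/2}$ uniformly.

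Your upper bound, however, has a genuine gap precisely at the step the paper spends most of its effort on. You write that $G_{\rho,\zeta}$ (the paper's $g_\infty$) ``is almost surely a non-constant analytic function with a discrete non-empty zero set, so $\P(G_{\rho,\zeta}\text{ has a zero in }B(0,C))\ge 1-\ep/2$ for $C$ large,'' but you offer no argument for a.s.\ non-emptiness of the zero set. This is not automatic for a Gaussian analytic function (e.g.\ $e^w$ or $e^w+c$ with $c\ne 0$ are zero-free and non-constant), and once one knows the zero set is a.s.\ infinite, monotone convergence only gives a non-effective $C$ that could a priori depend on $\zeta$. The paper's resolution is the second-moment method: it shows $\E N_{g_\infty}(U_C)=\Theta(C)$ and $\Var N_{g_\infty}(U_C)\ll C\log C$, which requires the off-diagonal decay $|\rho_2(z_1,z_2)-\rho_1(z_1)\rho_1(z_2)|\ll |z_1-z_2|^{-1}$ and the nontrivial fact (Lemma~\ref{lm:discont}) that $\rho_2-\rho_1\rho_1$ extends continuously across the diagonal, established via a lengthy cancellation in the Taylor expansion of $\det\mathcal S$ and $\per(\mathcal V-\mathcal W)$. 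That computation is exactly what is missing from your sketch; without it, step~3 is unjustified. Step~4 is also too light: Hurwitz's theorem is qualitative and per-realization, and to turn it into $\P(N_{g_n}(U_C)=0)\to\P(N_{g_\infty}(U_C)=0)$ one needs weak convergence of $g_n\to g_\infty$ in the space of analytic functions (tightness, not just finite-dimensional CLT), continuity of the zero-counting map on the set $A_C$ of functions with no multiple roots and none on $\partial U_C$, and a uniform integrability bound on $N_{g_n}^\ell(U_C)$ (the paper's Lemma~\ref{lemma:moments:infty}, which uses a double Taylor expansion). Finally, for non-Gaussian $\xi_i$ the paper does not attempt to prove weak convergence $g_n\to g_\infty$ directly; it proves the Gaussian case and then transfers moments of $N_{g_n}(U_C)$ via the correlation-function universality theorem of \cite{DOV}. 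Your proposal to run a Lindeberg CLT on the Taylor coefficients $Y_{k,n}$ gives finite-dimensional convergence, but with only $(2+\ep_0)$ moments it is not clear how you would obtain the tightness and uniform moment control needed to conclude. Your case split near $\pm 1$ is the right fix for the uniformity-in-$\zeta$ issue and matches the paper's structure, but it does not repair the missing second-moment argument.
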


As far as we understand, the proof in \cite{michelen2020real} is restricted to real roots and cannot be applied to complex roots.

For a related discussion concerning the hole probability, we refer to \cite{buckley2018hole}, \cite[Section 5.1.3]{HKPV} and the references therein.

\section{Proof sketch and ingredients}
For the upper bound, we need to split into two cases: $\zeta = \pm 1$ and $\zeta\neq \pm 1$. For the former, we show that the proof in \cite{michelen2020real} can be adapted to  cover $f_{\rho, n}$ for general $\rho\in \N$. This method relies on the simple observation that if a polynomial $f$ changes sign in an interval on the real line then it has at least one root there. Since this observation only holds for real roots, it merely works for $\zeta = \pm 1$ where one can reduce the upper-bound problem to showing that there exists a real root in the interval centered at $\zeta$ and radius $C/n$. For $\zeta\neq \pm 1$, we need a different approach.

To this end, we note that the expected value of $N_{f_{\rho, n}}(B(\zeta, C/n))$ being large does not imply that the number of real roots is non-zero with high probability. However, it can be achieved via Chebyshev inequality if we can show that its variance is of smaller order than its mean squared. To do so, the high-level idea is to show that $f_{\rho, n}$, after rescaled properly, converges to a Gaussian process, say $f_{\infty}$ which we can estimate the growth of $\Var \left (N_{f_{\infty}}(B(\zeta, C/n))\right )$ in terms of $C$ and then pass the result back to $f_{\rho, n}$. So, we consider a rescaled version of $f_{\rho, n}$ by zooming in at the local neighborhood of $\zeta$ as follows
$$g_{n}(z)=\frac{1}{n^{\rho+1/2}}f_{\rho, n}\left (\zeta+\frac1n \zeta z\right ).$$
When the random variables are Gaussian, we know that $g_n$ is a Gaussian process with covariance 
\begin{eqnarray}
\E g_n(z)\overline{g_n(w)} &=& \frac{1}{n^{2\rho+1}} \sum_{k=0}^{n} |\zeta|^{2k} a_{k, \rho, n}^{2}\left (1+\frac1n z\right )^{k}\left (1+\frac1n \bar w\right )^{k}\nonumber\\
 &=&  \frac{1}{n^{2\rho+1}}  \sum_{k=0}^{n} a_{k, \rho, n}^{2}\left (1+\frac1n z\right )^{k}\left (1+\frac1n \bar w\right )^{k}.\nonumber
\end{eqnarray}
We note that $\zeta$ disappears on the right-most side and could potentially account for why the hole radii are of the same order.

\vspace{5mm}
To show the cancellation $\Var \left (N_{f_{\infty}}(B(\zeta, C/n))\right )=o\left (\E \left (N_{f_{\infty}}(B(\zeta, C/n))\right )\right )^{2}$, we note that the variance is an integral of $\rho_2(z_1, z_2) - \rho_1(z_1) \rho_1(z_2)$ which is in turn of order $|z_1 - z_2|^{-1}$. So, when $z_1$ and $z_2$ are far away, the integrand is small, accounting for the cancellation. This suggests that the numbers of roots in far-away regions are weakly correlated which is consistent with previously established results for real roots (\cite{nguyenvuCLT}) and radius of complex roots (\cite{cook2023universality}). To handle the diagonal region when $z_1$ is near $z_2$, we come up with a simple argument, though via rather long and tedious algebraic manipulations, showing that the function $\rho_2(z_1, z_2) - \rho_1(z_1) \rho_1(z_2)$ is indeed continuous everywhere and hence the diagonal has negligible contribution, see Lemma \ref{lm:discont}. To carry out this strategy, we actually replace the whole ball $B(\zeta, C/n)$ by a subset, denoted by $U_C$, which is a thin strip along the unit circle. This is a major device that allows us to reduce from $|z_1-z_2|$ to $|\Im(z_1) - \Im(z_2)|$ which reduces the dimension and facilitates the rather elegant proof that follows. 

\vspace{5mm}
To pass from $f_{\infty}$ to $f_{\rho, n}$, we need to show some sort of uniform integrability of $N_{f_{\infty}}(B(\zeta, C/n)$. For that, we adapt a double Taylor expansion argument used in \cite{krishnapur2021number} (see Lemma \ref{lemma:moments:infty}).

\vspace{5mm}
Finally, to establish the lower bound \eqref{eq:hole:lowerbound}, we will show that the expected number of roots in the ball $B(\zeta, c/n)$ is small for sufficiently small $c$ and then apply Markov's inequality. The derivation of the expected number of roots is first reduced to the Gaussian setting when all coefficients $\xi_i$ are iid standard Gaussian, via the {\it universality} properties of the random polynomials. To do the calculation for the Gaussian case, there are two possible ways. The first way is to directly apply the classical Kac-Rice formula to $f_{\rho, n}$. The second way, which is what we perform here, is to derive it through $f_\infty$ using the limits that we already establish for the upper bound.

\vspace{5mm}
{\bf Notations.} For the rest of the paper, to simplify the notation, we will often drop the subscript $\rho$. For instance, we write $f_n$ in place of $f_{\rho, n}$. For a function $f$, let $\mathcal Z(f)=\{z\in \C: f(z) = 0\}$ be the zero set of $f$. 

We use standard asymptotic notations under the assumption that $n$ tends to infinity.  For two positive  sequences $(a_n)$ and $(b_n)$, we say that $a_n \gg b_n$ or $b_n \ll a_n$ if there exists a constant $C$ such that $b_n\le C a_n$. If $|c_n|\ll a_n$ for some sequence $(c_n)$, we also write $c_n\ll a_n$.

If $a_n\ll b_n\ll a_n$, we say that $b_n=\Theta(a_n)$.  If $\lim_{n\to \infty} \frac{a_n}{b_n} = 0$, we say that $a_n = o(b_n)$.  We also write that $a_n=O_C(b_n)$ if the implied constant depends on a given parameter $C$.


\section{Proof of Theorem \ref{thm:main}: upper bound for $\zeta = \pm 1$}\label{sec:upperbound0}
When $\zeta = \pm 1$, Michelen \cite{michelen2020real} already showed the stated upper bound for the Kac polynomial $f_{0, n}$. We will show that this proof can be easily adapted to cover the general case $f_{\rho, n}$. We assume that $\zeta=1$ as the case $\zeta=-1$ is completely similar. It suffices to show that with probability $\ge 1-\ep$, there is at least one root of $f_{\rho, n}$ in the interval $J := [1-C/n, 1+C/n]$ for some large constant $C$. 
Let $f_{0, n}$ be the $\rho$-th anti-derivative of $z^{-\rho}f_{\rho, n}$, then 
$$f_{0, n} = \sum_{i=0}^{n} (1+o(1)) \xi_i z^i$$
is basically the Kac polynomial (if disregarding the $1+o(1)$ terms). By interlacing, this can be deduced from showing that there are at least $\rho+1$ roots of $f_{0, n}$ in the same interval.  To this end, we show that we can find $(\rho+1)$ sub-intervals of $J$ each of which observes a sign change of $f_{0, n}$ and hence contains at least one root. 

Consider the rescaled polynomial
$$h_n(x) = \frac{1}{\sqrt n} f_{0, n}(1+x/n), \quad x\in \R.$$
Let $M$ be a large constant and $x_1, \dots, x_M$ be deterministic points in $J$. By \cite[Lemma 5]{michelen2020real} (which is a rather direct application of the Lindeberg-Feller Central Limit Theorem), the random vector $(h_n(x_1), \dots, h_n(x_M))$ converges to the Gaussian vector $(h(x_1), \dots, h(x_M))$ where $h$ is a centered, real Gaussian process with covariance
$$\E h(x) h(y) = \int_{0}^{1} e^{(x+y)t} dt.$$
(In fact, in \cite{michelen2020real}, this result is established for the Kac polynomial without the $1+o(1)$ terms as above but the proof can easily go through without changes when these terms are present.)

Let $M = (\rho+1)K$ where $K$ is a large constant to be chosen. By \cite[Lemma 6]{michelen2020real}, there exists a constant $\gamma>0$ such that any centered Gaussian vector $(Z_1, \dots, Z_K)$ with variances $\E Z_i^{2}=1$ for all $i$ and covariances $|\E Z_i Z_j|\le \gamma$ for all $i\neq j$ satisfies 
$$\P(\text{all $Z_1, \dots, Z_K$ have the same sign})\le 2^{-K+2}.$$

Direct calculation shows that if $y = \alpha x$ with $x>1$ and $\alpha>1$ then 
$$\left |\E \frac{h(x) h(y)}{\sqrt{\Var h(x)\Var h(y)}}\right | = o_{\alpha\to \infty} (1).$$
So, for a given $K$, by taking $\alpha$ sufficiently large, we can make this number smaller than $\gamma$. We then take $x_i = \alpha^{i-1}$, $i=1, \dots, M$. So, for $j=0, \dots, \rho$,
 $$\P(\text{all $h(x_{jK+1}), \dots, h(x_{jK+K})$ have the same sign})\le 2^{-K+2}.$$
 And so, for sufficiently large $n$,
  $$\P(\text{$h_n$ does not have any real roots in $[1+x_{jK+1}/n, 1+x_{jK+K}/n]$})\le 2^{-K+3}.$$
  By the union bound, the probability that $h_n$ has less than $\rho+1$ real roots in $[1+x_{1}/n, 1+x_{M}/n]$ (which is a union of $\rho+1$ such intervals above) is at most $(\rho+1) 2^{-K+3}$. By choosing $K$ sufficiently large so that this number is smaller than $\ep$, we obtain the desired tail probability.

\section{Proof of Theorem \ref{thm:main}: upper bound for $\zeta\neq \pm 1$}\label{sec:upperbound}
We want to show that there exists a constant $C$ such that with probability at least $1-\ep$, there is at least one root of $f_{\rho, n}$ in the ball $B(\zeta, 2C/n)$. For a sufficiently small constant $\delta>0$ depending only on $\rho$, we consider the strip $\zeta+\zeta U_C/n$ that goes along the unit circle where $U_C=(-\delta, \delta)\times (-C, C)\subset B(0, 2C)$. Since this strip is a subset of $B(\zeta, 2C/n)$, it suffices to show that with probability at least $1-\ep$, there is at least one root of $f_{\rho, n}$ in $U_C$. The use of $U_C$ in place of the ball allows us to derive the upper bound using much simpler arguments because for $z_1, z_2\in U_C$, they are either very close or $|z_1-z_2|\approx |y_1-y_2|$ where $y_i$ is the imaginary part of $z_i$ and is a real number!

Since the upper bound at $\pm 1$ has been proved in Theorem \ref{thm:hole1}, it suffices to assume that $\zeta\neq \pm 1$. 
\subsection{The setup}
Consider the following rescaled version of $f_n$, centered around $\zeta$
\begin{equation}\label{def:gn}
g_{n}(z)=\frac{1}{n^{\rho+1/2}}f_{\rho, n}\left (\zeta+\frac1n \zeta z\right ), z\in U_C.
\end{equation}
 
The proof consists of the following steps. 
\begin{enumerate}
	\item  Construct a Gaussian process $g_{\infty}$ that shall be the limit of $g_{n}$.
	
	\item Show that for all $\ep\ge 0$, there exists $C$ such that
	\begin{equation}\label{eq:nu:inf}
	\P(N_{g_\infty}(U_C)=0)\le \ep.
	\end{equation}

	\item When the random variables $\xi_i$ are iid standard Gaussian, show that on $U_C$,
	\begin{equation}\label{eq:conv:gn}
	g_n  \xrightarrow{w}  g.
	\end{equation}
 	
	\item Show that \begin{equation}\label{eq:uni}
	\E N^{k}_{g_n}(U_C)\to \E N^{k}_{g_\infty}(U_C)
	\end{equation} for general $\xi_i$ (not necessarily Gaussian).
	
	\item Show that this implies \begin{equation}\label{eq:conv:nu}
	\P(N_{g_n}(U_C)=0)\to \P(N_{g_\infty}(U_C)=0).
	\end{equation}
\end{enumerate}
 These steps are carried out in Sections \ref{sec:proof:ginf}, \ref{sec:proof:ginfU}, \ref{sec:proof:gU}, \ref{sec:proof:uni}, \ref{sec:proof:Puni}, respectively.

\subsection{Construct $g_{\infty}$}  \label{sec:proof:ginf}

We have for all $z, w\in U_C$, 
\begin{eqnarray*}
&& \E g_n(z)\overline{g_n(w)} =  \frac{1}{n^{\rho+1}} \sum_{k=0}^{n} |\zeta|^{2k}a_{k, \rho, n}^{2}(1+\frac1n z)^{k}(1+\frac1n \bar w)^{k} \notag\\
 &=& (1+o_n(1)) \frac1n (1+\frac1n z)^{\rho}(1+\frac1n \bar w)^{\rho} \frac{ \partial ^{2\rho}}{\partial z^{\rho}\partial \bar w^{\rho}}\sum_{k=0}^{n}  (1+\frac1n z)^{k}(1+\frac1n \bar w)^{k} \notag\\
 &=& (1+o_n(1))  \frac{ \partial ^{2\rho}}{\partial z^{\rho}\partial \bar w^{\rho}}\frac{(1+\frac1n z)^{n+1}(1+\frac1n \bar w)^{n+1}-1}{n\left ((1+\frac1n z)(1+\frac1n \bar w)-1\right ) }.\notag
\end{eqnarray*}
where we used $|\zeta|=1$.
When $\rho=0$, we have
\begin{eqnarray*}
 \E g_n(z)\overline{g_n(w)}  &\xrightarrow{n\to \infty}& \frac{1}{z+\bar w} (\exp(z+\bar w)-1) = \int_{0}^{1} e^{(z+\bar w)t} dt= F(z+\bar w).\notag
\end{eqnarray*}
where
\begin{equation}\label{def:F}
F(u) := \int_{0}^{1} e^{t u} dt = \frac{e^{u}-1}{u}.
\end{equation}
Similarly, for all $\rho$, 
we have
\begin{eqnarray*}
	\E g_n(z)\overline{g_n(w)}  &\xrightarrow{n\to \infty}& \frac{\partial ^{2\rho}}{\partial z^{\rho}\partial \bar w^{\rho}}  F(z+\bar w) = F^{(2\rho)}(z+\bar w) = \int_{0}^{1} t^{2\rho}e^{t (z+\bar w)} dt.\notag
\end{eqnarray*}
Moreover, since $\zeta\neq \pm 1$, 
\begin{eqnarray*}
	\E g_n(z) {g_n(w)} &=& (1+o(1))\frac{\partial ^{2\rho}}{\partial z^{\rho}\partial w^{\rho}}  \frac{ \zeta^{2n+2}(1+\frac1n z)^{n+1}(1+\frac1n   w)^{n+1}-1}{n\left (\zeta ^2(1+\frac1n z)(1+\frac1n   w)-1\right ) } \xrightarrow{n\to \infty} 0\notag
\end{eqnarray*}
as the denominator blows up (with its derivatives bounded) and the numerator is bounded.
So, it is logical to define the tentative limit $g_{\infty}$ to be
$$g_\infty(z) = \int_{0}^{1}t^{\rho} e^{zt} dB(t)$$
where $B$ is the standard complex Brownian motion $B(t) = \frac{1}{\sqrt 2} (B_1(t)+\sqrt{-1} B_2(t))$ with $B_1, B_2$ being independent standard real Brownian motions. We shall prove in Section \ref{sec:proof:gU} that $g_n$ indeed converges to $g_{\infty}$ when $\zeta\neq \pm 1$.

\subsection{Upper bound the hole radius for $g_{\infty}$}   \label{sec:proof:ginfU}
In this section, we want to show that there exists at least one root of $g_{\infty}$ in $U_C$ with high probability (as $C\to \infty$).
By Chebyshev's inequality, it suffices to show the following
\begin{equation}\label{eq:var:e:inf}
\Var N_{g_\infty}(U_C)=o_{C\to\infty}(\E N_{g_\infty}(U_C))^{2}.
\end{equation}
Indeed, we have
$$\P(N_{g_\infty}(U_C)=0)\le \frac{\Var N_{g_\infty}(U_C)}{(\E N_{g_\infty}(U_C))^{2}}=o(1).$$
So, by choosing $C$ to be sufficiently large, the probability of $g_{\infty}$ having no roots can be arbitrarily small.

To prove \eqref{eq:var:e:inf}, let us evaluate $\E N_{g_\infty}(U_C)$. Since $g_{\infty}$ is a Gaussian analytic function (\cite{HKPV}), we can use Kac-Rice formula for Gaussian case. By \cite[Corollary 3.4.2]{HKPV}, we have
\begin{eqnarray}
\E N_{g_\infty}(U_C) &=& \int_{U_C}\rho_1(z):=\int_{U_C}   \frac{V - T^{2} S^{-1}}{\pi S} dz=\int_{U_C}   \frac{VS - T^{2} }{\pi S^{2}} dz\notag
\end{eqnarray}
 where
    $$S(u) = \E |g_{\infty}(z)|^{2}, 
    \quad T(u) = \E  g_{\infty}'(z) \overline{g_{\infty}(z)}, 
    \quad V(u) = \E |g_{\infty}'(z)|^{2}.$$
Let $u = z+\bar z\in \R$. 
By the definition \eqref{def:F} of $F$, we have
$$S  = F^{(2\rho)}(u).$$
Taking derivative gives
$$T = F^{(2\rho+1)}(u), V =   F^{(2\rho+2)}(u).$$
Since $$F(u) = \frac{e^{u}-1}{u} = \sum_{k=0}^{\infty} \frac{u^{k}}{(k+1)!},$$
we have 
\begin{equation}
F^{(k)}(0) = \frac{1}{k+1}.\nonumber
\end{equation}
 Therefore, 
\begin{equation}\label{eq:prop:F}
F^{(\rho+2)}(0) F^{(\rho)}(0) - \left (F^{(\rho+1)}(0)\right )^{2}\neq 0.
\end{equation}
And so, $V(0)S(0) - T^{2}(0) = \frac{1}{(2\rho+1)(2\rho+3)} - \frac{1}{(2\rho+2)^{2}}>0.$
By choosing $\delta$ to be sufficiently small, it holds that for all $|u|\le 2\delta$, we have
\begin{equation}\label{eq:devF}
V(u)S(u) - T^{2}(u)=\Theta_{\rho}(1) \quad \mbox{and}\quad S(u) = \Theta_{\rho}(1).
\end{equation}
By the definition of $U_C$, $u = z+\bar z\in [-2\delta, 2\delta]$ for all $z\in U_C$.  Hence, $\rho_1(z) = \Theta_{\rho}(1)$ which implies
\begin{eqnarray}
 \E N_{g_\infty}(U_C) = \Theta(C).\notag 
 \end{eqnarray}
It remains to show the following. 
\begin{lemma} \label{lm:var} We have
	 \begin{eqnarray}
	 \Var N_{g_\infty}(U_C) &\ll& C \log C.
	 \end{eqnarray}
\end{lemma}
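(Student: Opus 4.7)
The plan is to start from the standard second-moment identity
$$\Var N_{g_\infty}(U_C) = \int_{U_C \times U_C} K_2(z_1,z_2)\,dz_1\,dz_2 + \E N_{g_\infty}(U_C),$$
where $K_2(z_1,z_2) := \rho_2(z_1,z_2) - \rho_1(z_1)\rho_1(z_2)$, and to bound the double integral by $O(C\log C)$; the $\E N_{g_\infty}(U_C)$ term is already shown to be $\Theta(C)$ in the calculation preceding the lemma. I will split $U_C\times U_C$ according to whether $|z_1-z_2|$ is at most $1$ (the diagonal slab $\mathcal D$) or larger (the off-diagonal region $\mathcal O$).

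On $\mathcal D$ I invoke Lemma \ref{lm:discont}, which asserts that $K_2$ extends continuously across the diagonal. Hence $K_2$ is bounded on the compact pieces of $\mathcal D\cap(U_C\times U_C)$, and since this set has $4$-dimensional Lebesgue measure $O(C)$, the diagonal contribution is at most $O(C)$, well within the target.

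The core of the argument is the off-diagonal pointwise bound
$$|K_2(z_1,z_2)|\;\ll\;\frac{1}{|y_1-y_2|},\qquad y_j:=\Im z_j,\quad (z_1,z_2)\in\mathcal O.$$
To establish this, I apply the Kac-Rice formula to the complex Gaussian vector $(g_\infty(z_1),g_\infty(z_2),g_\infty'(z_1),g_\infty'(z_2))$, writing $\rho_2$ as a permanent divided by the determinant of the appropriate $2\times 2$ covariance blocks $A, B, C$. When the cross-block entries vanish, the formula factors exactly as $\rho_1(z_1)\rho_1(z_2)$, so $K_2$ is controlled by the magnitudes of those entries and their relevant derivatives. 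All of these are of the form $F^{(k)}(z_1+\bar z_2)$ with $F(u)=(e^u-1)/u$, and since $z_1+\bar z_2=(x_1+x_2)+i(y_1-y_2)$ has real part bounded by $2\delta$ on $U_C$, integration by parts in $F^{(k)}(u)=\int_0^1 t^k e^{tu}\,dt$ gives $F^{(k)}(z_1+\bar z_2)=O_{k,\delta}(1/|y_1-y_2|)$, while the diagonal entries $F^{(2\rho)}(2x_i)=\Theta_\rho(1)$ stay bounded below. A careful expansion of the Kac-Rice expression, keeping track of which entries of $A$ remain $\Theta(1)$ and which decay, then produces the claimed cancellation.

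Integrating the pointwise bound over $\mathcal O$ and using that $U_C\subset(-\delta,\delta)\times(-C,C)$ gives
$$\int_{\mathcal O\cap (U_C\times U_C)} \frac{dz_1\,dz_2}{|y_1-y_2|}\;\le\; 4\delta^2\!\!\int_{\substack{y_1,y_2\in(-C,C)\\ |y_1-y_2|\ge 1/\sqrt 2}}\!\!\frac{dy_1\,dy_2}{|y_1-y_2|}\;\ll\; C\log C,$$
and combining with the diagonal and $\E N_{g_\infty}(U_C)$ contributions yields the lemma. The principal obstacle is the algebraic derivation in the off-diagonal step: one must verify that the leading term of the two-point Kac-Rice formula really does cancel $\rho_1(z_1)\rho_1(z_2)$, and that the residual inherits the $1/|y_1-y_2|$ decay of the cross-covariance---this is the "long and tedious" calculation flagged in the authors' sketch, and is precisely what the strip geometry of $U_C$ is designed to make tractable by collapsing the decay variable from $|z_1-z_2|$ to the one-dimensional quantity $|y_1-y_2|$.
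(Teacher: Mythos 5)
Your argument follows the paper's proof almost step for step: the same second-moment identity $\Var N = \E N(N-1)+\E N$, the same diagonal/off-diagonal split, the same pointwise bound $|\rho_2(z_1,z_2)-\rho_1(z_1)\rho_1(z_2)|\ll 1/|z_1-z_2|$ obtained from the Kac--Rice permanent formula together with the decay $F^{(k)}(z_1+\bar z_2)=O(1/|y_1-y_2|)$ of the cross-covariances, and the same appeal to Lemma \ref{lm:discont} on the diagonal. The only cosmetic difference is that you fix the cutoff at $|z_1-z_2|=1$ whereas the paper uses a threshold $D_0$ (eventually set to $\log C$); since the two contributions come out as $C\log(C/D_0)$ and $CD_0$, any bounded choice of $D_0$ gives $O(C\log C)$, so your version is equally valid.

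There is one place where your write-up elides a point the paper makes explicit, and as stated it is a genuine gap. You conclude the diagonal estimate from ``$K_2$ is bounded on the compact pieces of $\mathcal D\cap(U_C\times U_C)$,'' but this set grows with $C$, so continuity on it a priori only yields a bound depending on $C$, which would not give the $O(C)$ contribution you claim. The fix, which is exactly what the paper's proof of Lemma \ref{lm:diag} supplies, is to observe that $\rho_2$ depends on $(z_1,z_2)$ only through $(x_1,x_2,\,y_1-y_2)$, and on the slab $\{|z_1-z_2|\le 1\}\cap U_C^2$ this triple ranges over the $C$-independent compact set $[-\delta,\delta]^2\times[-1,1]$. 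Continuity on that fixed compact set is what produces a constant $M$ independent of $C$. You allude to this dimension reduction in your closing remarks about the strip geometry, but you must invoke it at the diagonal step itself; without it the $O(C)$ estimate does not follow from Lemma \ref{lm:discont} alone.
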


\begin{proof} [Proof of Lemma \ref{lm:var}]
	Let $N = N_{g_\infty}(U_C)$. We have
	\begin{eqnarray}
	\Var (N_{g_\infty}(U_C)) &=& \E N(N-1) + \E N = \E N(N-1) + O(C).\notag
	\end{eqnarray}
	So, it suffices to show that $\E N(N-1) \ll C\log C$. By \cite[Corollary 3.4.2]{HKPV}, 
	\begin{eqnarray}
	\E N(N-1) &=& \int_{U_C} \int_{U_C} \rho_2(z_1, z_2) - \rho_1(z_1) \rho_1(z_2) dz_1 dz_2
	\end{eqnarray}
	where 
	\begin{eqnarray}\label{eq:rho2}
	\pi^{2}\rho_2(z_1, z_2) =\frac{\text{per} (\mathcal V - \mathcal T \mathcal S^{-1} \mathcal T^{*})}{\det (\mathcal S)},
	\end{eqnarray}
	with $\mathcal S, \mathcal T, \mathcal V$ being $2\times 2$ matrices defined by
 	$$\mathcal S_{i, j} = \E g(z_i)\bar g(z_j) = \int_{0}^{1} t^{2\rho} e^{t u_{ij}}dt = F^{(2\rho)}(u_{ij}), \quad u_{ij} = z_i+\bar z_j$$
 	$$\mathcal T_{ij} = \E g'(z_i)\bar g(z_j) = \int_{0}^{1} t^{2\rho+1} e^{t u_{ij}}dt = F^{(2\rho+1)}(u_{ij})$$
 	$$V_{ij} = \E g'(z_i)\overline{ g'}(z_j) = \int_{0}^{1}  t^{2\rho+2}e^{t u_{ij}}dt = F^{(2\rho+2)}(u_{ij}).$$

 Since $\mathcal T^*=\mathcal T$, by letting  $\mathcal W = \mathcal T \mathcal S^{-1} \mathcal T^{*}$, we have  
\begin{eqnarray}
\mathcal W_{ij} &=&  =\sum_{k=1}^{2} \sum_{\ell = 1}^{2} \mathcal T_{ik} (\mathcal S^{-1})_{k\ell}   \mathcal T_{\ell j} 
\end{eqnarray}
where 
$$\det (\mathcal S) \mathcal S^{-1} = \begin{pmatrix}
\mathcal S_{22} & -\mathcal S_{12}\\
-\mathcal S_{21} & \mathcal S_{11}
\end{pmatrix}.$$

We present a straightforward observation from the forms of $\rho_1$ and $\rho_2$.
\begin{lemma}\label{lm:off:diag}
If $\mathcal S, \mathcal T, \mathcal V, \mathcal W$ were diagonal (namely, setting the off-diagonal entries to $0$), then $\rho_2(z_1, z_2) - \rho_1(z_1)\rho_1(z_2) = 0.$ .
\end{lemma}
	Motivated by this, we will show that for $z_1$ and $z_2$ far away, the above matrices are indeed diagonally dominated, and hence $\rho_2(z_1, z_2) - \rho_1(z_1)\rho_1(z_2)$ is small. In particular, we show the following.
	\begin{lemma} [Off diagonal]
		For all $D_0$ satisfying $10\le D_0\le C$, let  $\mathcal D_{C, D_0} = \{(z_1, z_2)\in U_{C}^{2}: |z_1 - z_2|\ge D_0\}$. For all $(z_1, z_2)\in U_C$, it holds that
		$$|\rho_2(z_1, z_2) - \rho_1(z_1) \rho_1(z_2)| \ll |z_1 - z_2|^{-1}.$$
		In particular, we have
		$$\iint_{\mathcal D_{C, D_0}} \rho_2(z_1, z_2) - \rho_1(z_1) \rho_1(z_2)  dz_1 dz_2\ll C \log(C/D_0).$$
	\end{lemma}

When $z_1$ and $z_2$ are close, we show that $\rho_2(z_1, z_2) - \rho_1(z_1)\rho_1(z_2)$ is bounded and hence the contribution from the diagonal region is negligible.
\begin{lemma}[Diagonal]\label{lm:diag}
	There exists a constant $M$, independent of $C$, such that for all $(z_1, z_2)\in U_{C}^{2}$, we have
	$$|\rho_2(z_1, z_2) - \rho_1(z_1) \rho_1(z_2)| \ll M.$$
	This implies
	$$\iint_{U_{C}^{2}\setminus \mathcal D_{C, D_0}} \left (\rho_2(z_1, z_2) - \rho_1(z_1) \rho_1(z_2) \right ) dz_1 dz_2\ll CD_0.$$
\end{lemma}

Assuming these lemmas, letting $D_0 = \log C$, we get that $\Var N(U_C) \ll C \log C $ as desired. This finishes the proof of Lemma \ref{lm:var}.
\end{proof}

	\begin{proof}[Proof of Lemma \ref{lm:off:diag}]
		 We write $x_i = \Re(z_i)$ and $y_i = \Im(z_i)$ for $i=1, 2$.  Note that $|x_i|\le \delta$ for all $i$. By \eqref{eq:devF},
		 \begin{equation}\label{eq:S:11}
		 |\mathcal S_{11}| = \Theta(1), |\mathcal S_{22}| = \Theta(1).
		 \end{equation}
		 Let $|z_1 - z_2| = D$, we have $D\ge D_0\ge 10$. So,
		 $|y_1 - y_2|\gg D$ and hence 
		 \begin{equation}\label{eq:u}
		 |u_{12}| = |u_{21}|\gg D.
		 \end{equation}
		 For all $u\in \C$, since $(u F(u))^{(k)} = kF^{(k-1)}+ uF^{(k)}$ and since the left-hand side equals $e^{u}$ for all $k$, we get
		 $$F^{(k)}(u) = \frac{e^{u}-k F^{(k-1)}(u)}{u}.$$
		For $u_{12} = z_1+\bar z_2$, we have $|\Re(u_{12})|\le 2\delta$ and so, 
		$$|e^{u_{12}}| = e^{\Re(u_{12})}=O(1).$$
		Hence, by \eqref{eq:u} and induction in $k$, it holds for all $k\le \rho$ that
		$$|F^{(k)}(u_{12})|\ll \frac{1}{|u_{12}|}$$
		 which gives 
		 \begin{equation}\label{eq:S:12}
		 |\mathcal S_{12}| = O(D^{-1}).
		 \end{equation}
		 Similarly, $|\mathcal S_{21}| = O(D^{-1})$.
		 
		 Thus, 
		 $$\det(\mathcal S) = \mathcal S_{11} \mathcal S_{22} - O(D^{-2}) = (1+O(D^{-2})) \mathcal S_{11} \mathcal S_{22} = \Theta(1).$$
		 And so, 
		 $$\mathcal S^{-1} = \begin{pmatrix}
		 \frac{1+O(D^{-2})}{\mathcal S_{11}} & O(D^{-1})\\
		 O(D^{-1})&  \frac{1+O(D^{-2})}{\mathcal S_{22}}
		 \end{pmatrix}.$$
		 Similarly, the same bounds as in \eqref{eq:S:11} and \eqref{eq:S:12} hold for $\mathcal T$ and $\mathcal V$ in place of $\mathcal S$.
		 
		 Using these bounds, we get that
		 $\mathcal W_{12}$ is the sum of 4 terms each of which is of order $O(D^{-1})$. So, $|\mathcal W_{12}| = O(D^{-1})$. Likewise, $|W_{21}| = O(D^{-1})$.
		 
		 Similarly, 
		 $$\mathcal W_{11} = O(D^{-1}) + \mathcal T_{11}^{2} (\mathcal S^{-1})_{11}= O(D^{-1}) + \frac{(1+O(D^{-2}))\mathcal T_{11}^{2}}{\mathcal S_{11}} = \frac{\mathcal T_{11}^{2}}{\mathcal S_{11}} + O(D^{-1}) = \Theta(1)$$
		 and
		 $$\mathcal W_{22} = \frac{\mathcal T_{22}^{2}}{\mathcal S_{22}} + O(D^{-1}) = \Theta(1).$$
		 
		 Therefore, $$|(\mathcal V-\mathcal W)_{12}|=O(D^{-1}), |(\mathcal V-\mathcal W)_{12}|=O(D^{-1}).$$
		 And
		 $$(\mathcal V-\mathcal W)_{11} = \mathcal V_{11} - \frac{\mathcal T_{11}^{2}}{\mathcal S_{11}} + O(D^{-1}) = \Theta(1)$$
		 where in the last equality, we used \eqref{eq:devF}. So, 
		 $$\text{per}(\mathcal V - \mathcal W) = \left (\mathcal V_{11} - \frac{\mathcal T_{11}^{2}}{\mathcal S_{11}}\right )\left (\mathcal V_{22} - \frac{\mathcal T_{22}^{2}}{\mathcal S_{22}}\right ) + O(D^{-1}).$$
		 
		 All in all, we get
		 \begin{eqnarray}
		 \pi^{2}\left |\rho_2(z_1, z_2) - \rho_1(z_1)\rho_1(z_2)\right | &=& \frac{\left (V_{11} - \frac{T_{11}^{2}}{\mathcal S_{11}}\right )\left (V_{22} - \frac{T_{22}^{2}}{\mathcal S_{22}}\right ) + O(D^{-1})}{(1+O(D^{-2})) \mathcal S_{11} \mathcal S_{22} }  - \frac{\left (V_{11} - \frac{T_{11}^{2}}{\mathcal S_{11}}\right )\left (V_{22} - \frac{T_{22}^{2}}{\mathcal S_{22}}\right )}{\mathcal S_{11} \mathcal S_{22} } \notag\\
		 &=& \frac{O(D^{-1})}{\Theta(1)}= O(D^{-1}).\notag
		 \end{eqnarray}
		 Integrating this over $\mathcal D_{C, D_0}$ we get
		 \begin{eqnarray}
		 \iint_{\mathcal D_{C, D_0}} \rho_2(z_1, z_2) - \rho_1(z_1) \rho_1(z_2)  dz_1 dz_2 & \ll & \iint_{\mathcal D_{C, D_0}}  |z_1-z_2|^{-1} dz_1 dz_2\notag\\
		 & \ll & \int_{U_{C}}dz_1\int_{w\in [-2\delta, 2\delta2]\times [-2C, 2C], |w|\ge D_0}  |w|^{-1} dw\notag\\
		 &=& \Theta(C \int_{D_0}^{C} y^{-1}dy) = \Theta(C \log(C/D_0)).\notag
		 \end{eqnarray}
		 This finishes the proof of Lemma \ref{lm:off:diag}.
			\end{proof}

	\begin{proof}[Proof of Lemma \ref{lm:diag}]
		Since $\rho_1(z_1)\rho_1(z_2)$ is bounded over $U_{C}^{2}$, we only need to show the boundedness of $\rho_2$. By the first part of Lemma \ref{lm:off:diag}, we can reduce to the region 
		$$U_{\text{diag}} := \{(z_1, z_2)\in U_C^{2}: |z_1-z_2|\ll 1\}.$$ 
		Since $\rho_2$ can be written as a function of $(u_{ij})_{i, j=1, 2}$, it is also a function of $x_1, x_2$ and $\Delta_y:=y_1 - y_2$ where we recall $x_i = \Re(z_i)$ and $y_i = \Im(z_i)$. Note that $U_{\text{diag}}$ is a subset of $\{(z_1, z_2): |x_1|\le \delta, |x_2|\le \delta, \Delta_y\ll 1\}$ which is a compact set. 
		So, if we can show that $\rho_2$ is in fact a continuous function of $x_1, x_2$ and $\Delta_y:=y_1 - y_2$, we conclude that it is bounded $U_{\text{diag}}$.
To show continuity, note that the only possible singularities of $\rho_2$ occur when $\det (\mathcal S) = 0$. Thus, it suffices to show the following.
		\begin{lemma}\label{lm:equal}
			If $\det(\mathcal S(z_1, z_2)) = 0$ then $z_1 = z_2$.
		\end{lemma}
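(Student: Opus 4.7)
\textbf{Proof plan for Lemma \ref{lm:equal}.} The plan is to recognize $\mathcal S$ as a Gram matrix and then exploit the linear independence of distinct exponentials. Define $\psi_i(t) := t^{\rho} e^{z_i t}$ for $t \in [0,1]$; then the integral representation of $F^{(2\rho)}$ yields
\[
\mathcal S_{ij} \;=\; F^{(2\rho)}(z_i + \bar z_j) \;=\; \int_0^1 t^{2\rho} e^{(z_i+\bar z_j)t}\,dt \;=\; \int_0^1 \psi_i(t)\,\overline{\psi_j(t)}\,dt,
\]
so $\mathcal S$ is exactly the Gram matrix of $(\psi_1, \psi_2)$ with respect to the standard inner product on $L^2([0,1])$.

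Next I would invoke the standard fact that a positive semi-definite Gram matrix is singular if and only if its generating vectors are linearly dependent in the underlying Hilbert space (equivalently, $c^*\mathcal S c = \|c_1\psi_1 + c_2\psi_2\|_{L^2}^2$, up to a harmless conjugation depending on convention). Thus $\det(\mathcal S) = 0$ produces a nonzero vector $(c_1, c_2) \in \C^2$ with $c_1\psi_1 + c_2\psi_2 = 0$ almost everywhere on $[0,1]$. Dividing by $t^\rho$ on $(0,1]$, this becomes $c_1 e^{z_1 t} + c_2 e^{z_2 t} = 0$ for a.e.\ $t \in (0,1]$; since the left-hand side is entire in $t$, the identity must in fact hold for every $t \in \C$.

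To close the argument, I would evaluate at $t = 0$ to get $c_1 + c_2 = 0$, so $(c_1, c_2) = c_1(1, -1)$ with necessarily $c_1 \neq 0$. Therefore $e^{z_1 t} \equiv e^{z_2 t}$ on $\C$, and differentiating once at $t = 0$ yields $z_1 = z_2$, as required.

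I do not anticipate any serious obstacle. The only step requiring any real thought is the initial recognition of $\mathcal S_{ij}$ as an $L^2([0,1])$ inner product of the $\psi_i$'s, and this is immediate from the integral formula for $F^{(2\rho)}$. From there everything reduces to elementary linear algebra together with the classical fact that the exponentials $e^{z_1 t}$ and $e^{z_2 t}$ are $L^2$-linearly independent whenever $z_1 \neq z_2$.
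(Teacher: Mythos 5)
Your proof is correct and takes essentially the same route as the paper: both interpret $\mathcal S$ as the Gram (covariance) matrix of the functions $t^\rho e^{z_i t}$ in $L^2([0,1])$, so that $\det(\mathcal S)=0$ forces a nontrivial linear dependence $c_1 e^{z_1 t}+c_2 e^{z_2 t}\equiv 0$, which in turn forces $z_1=z_2$. Your write-up is in fact a bit more careful than the paper's at the end (evaluating at $t=0$ and differentiating to extract $z_1=z_2$, and correctly retaining the $t^\rho$ weight before dividing it out), but there is no difference in the underlying argument.
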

	
	\begin{lemma}\label{lm:discont} For all $z\in U_C$, $\rho_2(z_1, z_2)$ is continuous at $(z_1, z_2) = (z, z)$.
	\end{lemma}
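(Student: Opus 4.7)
The plan is to view $\rho_2(z_1,z_2)$ as a rational function of the entries of $\mathcal{S}, \mathcal{T}, \mathcal{V}$, each of which is jointly analytic in $(z_1,\bar z_1, z_2, \bar z_2)$. By Lemma \ref{lm:equal}, $\det(\mathcal{S})$ vanishes only on the diagonal $z_1 = z_2$, so $\rho_2$ is automatically smooth off the diagonal. To establish continuity at $(z,z)$ it is enough to parametrize $z_1 = z+h_1$ and $z_2 = z+h_2$ with $(h_1,h_2)\to(0,0)$ and check that $\rho_2(z_1,z_2)$ has a finite limit that depends continuously on $z$.

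The core device is a Taylor expansion around $u_0 := z+\bar z$. Setting $\epsilon_{ij} = u_{ij}-u_0$, one has $\epsilon_{11} = 2\Re h_1$, $\epsilon_{22} = 2\Re h_2$, $\epsilon_{12} = h_1+\bar h_2$, $\epsilon_{21} = \bar h_1+h_2$, and a direct calculation yields three key identities
\begin{equation*}
\epsilon_{11}+\epsilon_{22} = \epsilon_{12}+\epsilon_{21},\quad \epsilon_{11}^2+\epsilon_{22}^2-\epsilon_{12}^2-\epsilon_{21}^2 = 2|h_1-h_2|^2,\quad \epsilon_{11}\epsilon_{22}-\epsilon_{12}\epsilon_{21} = -|h_1-h_2|^2
\end{equation*}
which will drive every cancellation below. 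Using $\mathcal{S}_{ij} = F^{(2\rho)}(u_0)+F^{(2\rho+1)}(u_0)\epsilon_{ij}+\tfrac{1}{2}F^{(2\rho+2)}(u_0)\epsilon_{ij}^{2}+O(|h|^3)$ together with analogous expansions of $\mathcal{T}_{ij}$ and $\mathcal{V}_{ij}$, the constant and linear parts of $\det(\mathcal{S})$ cancel by the first identity, and the quadratic part collapses via the remaining two to
\begin{equation*}
\det(\mathcal{S}) = \bigl(F^{(2\rho)}(u_0)F^{(2\rho+2)}(u_0)-F^{(2\rho+1)}(u_0)^{2}\bigr)|h_1-h_2|^{2}+O(|h_1-h_2|^{3}),
\end{equation*}
with leading coefficient strictly positive by \eqref{eq:devF}. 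Hence $\det(\mathcal{S})$ vanishes to order exactly $|h_1-h_2|^2$.

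For the numerator, I would clear inverses by writing $\pi^{2}\rho_2\det(\mathcal{S})^{3} = \text{per}\bigl(\mathcal{V}\det(\mathcal{S})-\mathcal{T}\,\operatorname{adj}(\mathcal{S})\,\mathcal{T}^{*}\bigr)$ and run the same Taylor bookkeeping. The three identities force each entry of $\mathcal{V}\det(\mathcal{S})-\mathcal{T}\operatorname{adj}(\mathcal{S})\mathcal{T}^{*}$ to vanish to order at least $|h_1-h_2|^{2}$, so its permanent vanishes to order at least $|h_1-h_2|^{6}$, matching $\det(\mathcal{S})^{3}$. The quotient therefore has a finite limit that is a rational expression in $F^{(k)}(u_0)$ for $k = 2\rho,\ldots,2\rho+3$, and this expression is manifestly continuous in $z$ on $U_C$ (the denominator $F^{(2\rho)}F^{(2\rho+2)}-(F^{(2\rho+1)})^{2}$ being bounded away from $0$ by \eqref{eq:devF}). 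The main technical obstacle is this last step — carrying out the permanent expansion carefully enough to confirm the matching order of vanishing and to rule out a spurious singularity in the ratio. The computation itself is elementary but algebraically long (these are precisely the ``long and tedious algebraic manipulations'' flagged in Section 2); the three identities above, combined with the nondegeneracy in \eqref{eq:devF}, make the cancellations essentially mechanical.
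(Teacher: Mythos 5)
Your overall plan is the same as the paper's: Taylor-expand $\mathcal{S},\mathcal{T},\mathcal{V}$ around the base value $u_0$, use the algebraic cancellation identities to show $\det(\mathcal{S})$ vanishes to order exactly $|h_1-h_2|^2$ with leading coefficient $ac-b^2\neq 0$, then match the vanishing order of $\per\bigl(\det(\mathcal{S})\,\mathcal{V}-\mathcal{T}\operatorname{adj}(\mathcal{S})\mathcal{T}^*\bigr)$ against $(\det\mathcal{S})^3$. The three $\epsilon_{ij}$-identities you record are correct, your formula $\pi^2\rho_2(\det\mathcal{S})^3=\per(\mathcal Y)$ is correct, and your leading-order evaluation of $\det(\mathcal{S})$ agrees with the paper's.

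However, there is a concrete arithmetic error in the step you defer. Writing $r=|h_1-h_2|$, you claim each entry of $\mathcal Y$ vanishes to order at least $r^2$ and conclude $\per(\mathcal Y)=O(r^6)$. For a $2\times 2$ matrix, $\per(\mathcal Y)=\mathcal Y_{11}\mathcal Y_{22}+\mathcal Y_{12}\mathcal Y_{21}$ is a sum of products of only \emph{two} entries, so entrywise order $r^2$ gives $\per(\mathcal Y)=O(r^4)$, and the quotient $\per(\mathcal Y)/(\det\mathcal S)^3 = O(r^{-2})$ would blow up. What is actually needed, and what the paper verifies after the long expansion, is that each $\mathcal Y_{ij}$ vanishes to order $r^3$ (in the paper's slice parametrization this is the statement $\mathcal Y_{ij}=O(\ep^3+\ep\delta^2+\ep^2\delta+\delta^3)$). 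So the deferred ``long and tedious'' computation is not a safe place to stop: the exponent you assume there is off by one, and it is precisely what decides whether the ratio is bounded.

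A second, related issue: to extract the cubic entrywise vanishing you must expand $\mathcal S,\mathcal T,\mathcal V$ to third order, not second. With only a second-order expansion the error is $O(|h|^3)$ where $|h|=|h_1|+|h_2|$, and in your general parametrization $|h|$ is not comparable to $|h_1-h_2|$ (take $h_1=h_2\neq 0$: then $r=0$ but $|h|>0$, while $\mathcal Y$ still vanishes identically there). The paper sidesteps this by fixing $z_1=z$ (so $|h|=|h_1-h_2|$), and then recovers full continuity because all Taylor constants depend continuously on the base point. If you want to keep your symmetric $h_1,h_2$ parametrization — which is otherwise a clean way to present the argument — you would either need to first observe that $\rho_2$ depends only on $(x_1,x_2,\Im(z_1-z_2))$ and reduce to the slice, or else carry the expansion far enough that the errors are controlled by $r$ rather than by $|h|$.
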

Proving these lemmas will complete the proof of Lemma \ref{lm:diag}.
	\end{proof}
	\begin{proof}[Proof of Lemma \ref{lm:equal}]
	 Assume that $\det (\mathcal S(z_1, z_2))=0$. Since $\mathcal S(z_1, z_2)$ is a $2\times 2$ (complex) matrix, there exist deterministic complex numbers $w_1, w_2$ such that 
	 $$(w_1 \ w_2)\mathcal S (w_1 \ w_2)^{\mathcal T}=0.$$
	 In other words, 
	 $$\E |w_1 g(z_1) + w_2 g(z_2)|^{2}=0.$$
	 Since the left-hand side equals $\int_{0}^{1} |w_1 e^{tz_1}+w_2 e^{tz_2}|^{2}dt$, we conclude that the integrand is 0 for almost all $t$ (and hence for all $t$ by continuity). Therefore, it is necessary that $z_1 = z_2$.
	\end{proof}
	
	 	\begin{proof}[Proof of Lemma \ref{lm:discont}] 
	 We need to show that for all $z\in \C$ (or jut $U_C$ if needed), 
	 $$\lim_{(\ep, \delta)\to(0, 0)} \rho_2(z, z+\ep+i\delta) \text{ exists}.$$
	 We shall perform Taylor expansion to the order 2 of the functions appearing in \eqref{eq:rho2}. Here $z_1 = z = x+iy, z_2 = z+\ep+i\delta$. Then
	 $$u_{11} = 2x=: u, u_{12} = (2x+\ep)-i\delta, u_{22} = 2x+2\ep.$$
	 Let 
	 $$a=  F^{(2\rho)}(u), b=F^{(2\rho+1)}(u), c=F^{(2\rho+2)}(u), d= F^{(2\rho+3)}(u), e=F^{(2\rho+4)}(u).$$

	 So, 
	 $$\mathcal S_{11} = F^{(2\rho)}(u)=a,$$
	 $$\mathcal S_{22} = F^{(2\rho)}(u+2\ep)\approx a+2\ep b+2\ep^{2}c,$$
	 $$\mathcal S_{12}=\overline{ \mathcal S_{21}} = F^{(2\rho)}(u+\ep-i\delta) \approx a + (\ep-i\delta) b + \frac{(\ep-i\delta)^{2}}{2}c = \left (a + \ep b+\frac{\ep^{2}-\del^{2}}{2}c\right )   -i\left (\delta b + \ep\del c\right ).$$
	 That is
	\begin{equation}\label{eq:S:d}
	\mathcal S \approx \begin{pmatrix}
	a & \left (a + \ep b+\frac{\ep^{2}-\del^{2}}{2}c\right )   -i\left (\delta b + \ep\del c\right )\\
	\left (a + \ep b+\frac{\ep^{2}-\del^{2}}{2}c\right )   +i\left (\delta b + \ep\del c\right ) & a+2\ep b+2\ep^{2}c
	\end{pmatrix}.
	\end{equation}
	 
	 The denominator of $\rho_2$ is
	\begin{eqnarray}
	\det(\mathcal S) &=& \mathcal S_{11} \mathcal S_{22} - \mathcal S_{12} \mathcal S_{21} \approx a( a+2\ep b+2\ep^{2}c) - \left (a + \ep b+\frac{\ep^{2}-\del^{2}}{2}c\right )^{2}- \left (\delta b + \ep\del c\right )^{2}\notag\\
	&=& 2\ep^{2} ac-\ep^{2}b^{2}-(\ep^{2}-\delta^{2})ac-\del^{2}b^{2}+o(\ep^{2}+\del^{2})\approx (\ep^{2}+\delta^{2})(ac-b^{2}).\notag
	\end{eqnarray}
	Note that by \eqref{eq:devF}, $ac-b^{2}\neq 0$.
	
	Since $\mathcal T$ and $\mathcal V$ are similar to $\mathcal S$, we get
	$$\mathcal T \approx \begin{pmatrix}
	 b & \left (b + \ep c+\frac{\ep^{2}-\del^{2}}{2}d\right )   -i\left (\delta c + \ep\del d\right )\\
	 \left (b + \ep c+\frac{\ep^{2}-\del^{2}}{2}d\right )   +i\left (\delta c + \ep\del d\right ) & b+2\ep c+2\ep^{2}d
	\end{pmatrix}$$
	and 
		$$\mathcal V \approx \begin{pmatrix}
	c & \left (c + \ep d+\frac{\ep^{2}-\del^{2}}{2}e\right )   -i\left (\delta d + \ep\del e\right )\\
	\left (c + \ep d+\frac{\ep^{2}-\del^{2}}{2}e\right )   +i\left (\delta d + \ep\del e\right ) & c+2\ep d+2\ep^{2}e
	\end{pmatrix}.$$
	
	Note that the numerator of \eqref{eq:rho2} for $\rho_2$ equals
	$$\per (\mathcal V-\mathcal W) = \frac{1}{\det(\mathcal S)^{2}} \per ((\det \mathcal S) \mathcal V  - (\det \mathcal S) \mathcal W) .$$
	So far, \eqref{eq:rho2} becomes
	$$\pi^{2}\rho_2 \approx \frac{\per ((\det \mathcal S) \mathcal V  - (\det \mathcal S) \mathcal W)}{ (\ep^{2}+\delta^{2})^{3}(ac-b^{2})^{3}}=: \frac{\per (\mathcal Y)}{(\ep^{2}+\delta^{2})^{3}(ac-b^{2})^{3}}.$$
	
	Next, we write down $(\det \mathcal S) \mathcal W$. We have
$$(\det \mathcal S) \mathcal W_{11} =  \mathcal T_{11} \mathcal S_{22} \mathcal T_{11} - \mathcal T_{12} \mathcal S_{21} \mathcal T_{11}- \mathcal T_{11}\mathcal S_{12}\mathcal T_{21}+\mathcal T_{12}\mathcal S_{11}\mathcal T_{21}$$
$$(\det \mathcal S) \mathcal W_{12} =  \mathcal T_{11} \mathcal S_{22} \mathcal T_{12} - \mathcal T_{12} \mathcal S_{21} \mathcal T_{12}- \mathcal T_{11}\mathcal S_{12}\mathcal T_{22}+\mathcal T_{12}\mathcal S_{11}\mathcal T_{22}$$
$$(\det \mathcal S) \mathcal W_{21} =  \mathcal T_{21} \mathcal S_{22} \mathcal T_{11} - \mathcal T_{22} \mathcal S_{21} \mathcal T_{11}- \mathcal T_{21}\mathcal S_{12}\mathcal T_{21}+\mathcal T_{22}\mathcal S_{11}\mathcal T_{21}$$
$$(\det \mathcal S) \mathcal W_{22} =  \mathcal T_{21} \mathcal S_{22} \mathcal T_{12} - \mathcal T_{22} \mathcal S_{21} \mathcal T_{12}- \mathcal T_{21}\mathcal S_{12}\mathcal T_{22}+\mathcal T_{22}\mathcal S_{11}\mathcal T_{22}$$
	So, 
\begin{eqnarray}
\mathcal Y_{11} &=& (\ep^{2}+\delta^{2})(ac-b^{2}) c - (\mathcal T_{11} \mathcal S_{22} \mathcal T_{11} - \mathcal T_{12} \mathcal S_{21} \mathcal T_{11}- \mathcal T_{11}\mathcal S_{12}\mathcal T_{21}+\mathcal T_{12}\mathcal S_{11}\mathcal T_{21})\notag\\
&=& {(\ep^{2}+\delta^{2})(ac-b^{2}) c} -   b^{2} (a+2\ep b+2\ep^{2}c) \notag\\
&&{+ 2b\Re\left ( \left (b + \ep c+\frac{\ep^{2}-\del^{2}}{2}d\right ) + i\left (\delta c + \ep\del d\right )\right ) \left (\left (a + \ep b+\frac{\ep^{2}-\del^{2}}{2}c\right )   -i\left (\delta b + \ep\del c\right )\right )} \notag\\
&&- a\left (b + \ep c+\frac{\ep^{2}-\del^{2}}{2}d\right )^{2}   - a \left (\delta c + \ep\del d\right )^{2} \notag\\
&=& {(\ep^{2}+\delta^{2})(ac-b^{2}) c} -   b^{2} (a+2\ep b+2\ep^{2}c) \notag\\
&&{+ 2b \left ( \left (b + \ep c+\frac{\ep^{2}-\del^{2}}{2}d \right )  \left (a + \ep b+\frac{\ep^{2}-\del^{2}}{2}c\right )  +  \left (\delta c + \ep\del d\right ) \left (\delta b + \ep\del c\right )\right )}  \notag\\
&&- a\left (b + \ep c+\frac{\ep^{2}-\del^{2}}{2}d\right )^{2}   - a \left (\delta c + \ep\del d\right )^{2} \notag.
\end{eqnarray}
Grouping the terms with $\ep$, $\delta$, $\ep \delta$, $\ep^{2}$, $\delta^{2}$ and smaller order terms together, we get
\begin{eqnarray}
\mathcal Y_{11} &=& \ep\left (-2b^{3}+2b^{3}+2abc-2abc\right ) + \ep^{2}\left ((ac-b^{2}) c + b^{2}c+abd-ac^{2}-abd\right ) \notag\\
&& + \del^{2}\left ((ac-b^{2}) c -b^{2}c-abd+2b^{2}c+abd-ac^{2}\right )+ O(\ep^{3}+\ep\delta^{2}+\ep^{2}\delta+\del^{3})\notag\\
&=&  O(\ep^{3}+\ep\delta^{2}+\ep^{2}\delta+\del^{3})\notag.
\end{eqnarray}

We now try to accomplish the same estimate for the other three entries of $\mathcal Y$. We have
\begin{eqnarray}
\mathcal Y_{22} &=& (\ep^{2}+\delta^{2})(ac-b^{2}) \left(  c+2\ep d+2\ep^{2}e\right ) - (\mathcal T_{22}\mathcal S_{11}\mathcal T_{22} - \mathcal T_{22} \mathcal S_{21} \mathcal T_{12}- \mathcal T_{21}\mathcal S_{12}\mathcal T_{22}+\mathcal T_{21} \mathcal S_{22} \mathcal T_{12} )\notag\\
&=& { (\ep^{2}+\delta^{2})(ac-b^{2})  \left(  c+2\ep d+2\ep^{2}e\right )} -a\left (b+2\ep c+2\ep^{2}d\right )^{2}  \notag\\
&&{+ 2\left (b+2\ep c+2\ep^{2}d\right )\Re\left( \left ( \left (b + \ep c+\frac{\ep^{2}-\del^{2}}{2}d\right ) + i\left (\delta c + \ep\del d\right )\right )\right .}\notag\\
&&{\hspace{43mm} \left .\left (\left (a + \ep b+\frac{\ep^{2}-\del^{2}}{2}c\right )   -i\left (\delta b + \ep\del c\right )\right )\right ] }\notag\\
&&- \left (b+2\ep c+2\ep^{2}d\right )\left (b + \ep c+\frac{\ep^{2}-\del^{2}}{2}d\right )^{2}   - \left (b+2\ep c+2\ep^{2}d\right ) \left (\delta c + \ep\del d\right )^{2} \notag.
\end{eqnarray}
So, 
\begin{eqnarray}
\mathcal Y_{22} 
&=& {(\ep^{2}+\delta^{2})(ac-b^{2})  \left(  c+2\ep d+2\ep^{2}e\right )} -a\left (b+2\ep c+2\ep^{2}d\right )^{2}  \notag\\
&&{+ 2\left (b+2\ep c+2\ep^{2}d\right ) \left ( \left (b + \ep c+\frac{\ep^{2}-\del^{2}}{2}d \right )  \left (a + \ep b+\frac{\ep^{2}-\del^{2}}{2}c\right )  +  \del^{2}\left (  c + \ep  d\right ) \left (  b + \ep  c\right )\right ) } \notag\\
&&- {\left (a+2\ep b+2\ep^{2}c\right )\left (b + \ep c+\frac{\ep^{2}-\del^{2}}{2}d\right )^{2}   - \left (a+2\ep b+2\ep^{2}c\right ) \del^{2}\left (c + \ep d\right )^{2}} \notag
\end{eqnarray}
Comparing this with $\mathcal Y_{11}$, we get
\begin{eqnarray}
\mathcal Y_{22}  
&=&  \mathcal Y_{11} + {(\ep^{2}+\delta^{2})(ac-b^{2})  \left( 2\ep d+2\ep^{2}e\right ) }+ (2\ep b^{3} +2\ep^{2} b^{2}c - 4 \ep^{2} ac^{2} - 4\ep^{2}abd - 4\ep abc)\notag\\
&&{+ 2\left (2\ep c+2\ep^{2}d\right ) \left ( \left (b + \ep c+\frac{\ep^{2}-\del^{2}}{2}d \right )  \left (a + \ep b+\frac{\ep^{2}-\del^{2}}{2}c\right )  +  \left (\delta c + \ep\del d\right ) \left (\delta b + \ep\del c\right )\right )}  \notag\\
&&- {\left (2\ep b+2\ep^{2}c\right )\left (b + \ep c+\frac{\ep^{2}-\del^{2}}{2}d\right )^{2}   - \left (2\ep b+2\ep^{2}c\right ) \del^{2}\left (c + \ep d\right )^{2}} +O(\ep^{3}+\ep\delta^{2}+\ep^{2}\delta+\del^{3})\notag\\
&=&  O(\ep^{3}+\ep\delta^{2}+\ep^{2}\delta+\del^{3})  + (2\ep b^{3} +2\ep^{2} b^{2}c - 4 \ep^{2} ac^{2} - 4\ep^{2}abd - 4\ep abc)\notag\\
&&{+ 2\left (2\ep c+2\ep^{2}d\right )  \left (b + \ep c  \right )  \left (a + \ep b \right )}  - {\left (2\ep b+2\ep^{2}c\right )\left (b + \ep c \right )^{2}   }  \notag
\end{eqnarray}
which gives
\begin{eqnarray}
\mathcal Y_{22}  
&=&O(\ep^{3}+\ep\delta^{2}+\ep^{2}\delta+\del^{3})  + (2\ep b^{3} +2\ep^{2} b^{2}c - 4 \ep^{2} ac^{2} - 4\ep^{2}abd - 4\ep abc)\notag\\
&&{+ 2\left (2\ep c+2\ep^{2}d\right )  \left (ab + \ep b^{2}+\ep ac \right )}  - {\left (2\ep b+2\ep^{2}c\right )\left (b^{2} +2 \ep bc \right )}  \notag\\
&=&O(\ep^{3}+\ep\delta^{2}+\ep^{2}\delta+\del^{3})  + (2\ep b^{3} +2\ep^{2} b^{2}c - 4 \ep^{2} ac^{2} - 4\ep^{2}abd - 4\ep abc)\notag\\
&&{+  \left (4\ep abc + 4\ep^{2}  b^{2}c+4\ep^{2} ac^{2} + 4\ep^{2} abd\right )}  - { \left (2\ep b^{3} +6 \ep^{2} b^{2}c  \right )}  \notag\\
&=&O(\ep^{3}+\ep\delta^{2}+\ep^{2}\delta+\del^{3}) \notag.
\end{eqnarray}
Finally, 
\begin{eqnarray}
\mathcal Y_{12} &=& (\ep^{2}+\delta^{2})(ac-b^{2})  \left (\left (c + \ep d+\frac{\ep^{2}-\del^{2}}{2}e\right )   -i\left (\delta d + \ep\del e\right )\right )\notag\\
&& - (T_{11} \mathcal S_{22} T_{12} - T_{12} \mathcal S_{21} T_{12}- T_{11}\mathcal S_{12}T_{22}+T_{12}\mathcal S_{11}T_{22})\notag\\
&=& O(\ep^{3}+\ep\delta^{2}+\ep^{2}\delta+\del^{3}) + (\ep^{2}+\delta^{2})(ac-b^{2}) c \notag\\
&&{- b \left (a+2\ep b+2\ep^{2}c\right ) \left (\left (b + \ep c+\frac{\ep^{2}-\del^{2}}{2}d\right )   -i\left (\delta c + \ep\del d\right )\right ) }\notag\\
&& {+ \left (\left (b + \ep c+\frac{\ep^{2}-\del^{2}}{2}d\right )   -i\left (\delta c + \ep\del d\right )\right )^{2} \left (\left (a + \ep b+\frac{\ep^{2}-\del^{2}}{2}c\right )   +i\left (\delta b + \ep\del c\right ) \right )}\notag\\
&& {+ b\left (b+2\ep c+2\ep^{2}d\right ) \left (\left (a + \ep b+\frac{\ep^{2}-\del^{2}}{2}c\right )   -i\left (\delta b + \ep\del c\right )\right )}  \notag\\
&&- a \left ( b+2\ep c+2\ep^{2}d\right ) \left ( \left (b + \ep c+\frac{\ep^{2}-\del^{2}}{2}d\right )   -i\left (\delta c + \ep\del d\right )\right ).\notag
\end{eqnarray}
So, 
\begin{eqnarray}
\mathcal Y_{12} &=& O(\ep^{3}+\ep\delta^{2}+\ep^{2}\delta+\del^{3}) + (\ep^{2}+\delta^{2})(ac-b^{2}) c \notag\\
&&{- ab\left ( b + \ep c+\frac{\ep^{2}-\del^{2}}{2}d -i\left (\delta c + \ep\del d\right )\right ) -2\ep b^{2} (b + \ep c -i \delta c)-2\ep^{2}b^{2}c}\notag\\
&& {+b\left ( b + \ep c+\frac{\ep^{2}-\del^{2}}{2}d  -i\left (\delta c + \ep\del d\right )\right ) \left ( a + \ep b+\frac{\ep^{2}-\del^{2}}{2}c   +i\left (\delta b + \ep\del c\right ) \right )}\notag\\
&&  {+ \ep c \left (\left (b+\ep c \right )   -i\delta c \right )  \left ( a + \ep b   +i \delta b   \right ) + \frac{\ep^{2}-\del^{2}}{2}d ab}\notag\\
	&&  {- i\delta c \left (\left (b+\ep c \right )   -i\delta c \right )  \left ( a + \ep b   +i \delta b   \right ) -i\ep\delta abd}\notag\\
&& { +b^{2}\left ( a + \ep b+\frac{\ep^{2}-\del^{2}}{2}c   -i\left (\delta b + \ep\del c\right )\right )+  2\ep bc \left ( a + \ep b -i \delta b  \right )+2\ep^{2}abd  }\notag\\
&&- ab \left (  b + \ep c+\frac{\ep^{2}-\del^{2}}{2}d  -i\left (\delta c + \ep\del d\right )\right ) -   2\ep a c\left (  b + \ep c   -i \delta c \right ) +2\ep^{2}abd\notag
\end{eqnarray}
giving
\begin{eqnarray}
\mathcal Y_{12} &=& O(\ep^{3}+\ep\delta^{2}+\ep^{2}\delta+\del^{3}) + (\ep^{2}+\delta^{2})(ac-b^{2}) c \notag\\
&&{- ab\left ( b + \ep c+\frac{\ep^{2}-\del^{2}}{2}d -i\left (\delta c + \ep\del d\right )\right ) -2\ep b^{2} (b + \ep c -i \delta c)-2\ep^{2}b^{2}c}\notag\\
&& {+b^{2}\left (a + \ep b+\frac{\ep^{2}-\del^{2}}{2}c   +i\left (\delta b + \ep\del c\right ) \right ) + \ep bc\left (a + \ep b +i \delta b  \right )+\frac{\ep^{2}-\del^{2}}{2}abd  }\notag\\
&&{ -i \delta bc\left (a + \ep b +i \delta b  \right ) -i \ep\del abd}\notag\\
&&  {+   \ep bc\left ( a + \ep b   +i \delta b   \right )+\ep^{2} c^{2}\left ( a + \ep b   +i \delta b   \right )  -i\ep \delta a c^{2} + \frac{\ep^{2}-\del^{2}}{2}d ab}\notag\\
&&{ - i\delta bc \left ( a + \ep b   +i \delta b   \right ) -i\ep \delta a c^{2}  -\delta^{2} a c^{2}    -i\ep\delta abd}\notag\\
&& { +b^{2}\left ( a + \ep b+\frac{\ep^{2}-\del^{2}}{2}c   -i\left (\delta b + \ep\del c\right )\right )+  2\ep bc \left ( a + \ep b -i \delta b  \right )+2\ep^{2}abd  }\notag\\
&&- ab \left (  b + \ep c+\frac{\ep^{2}-\del^{2}}{2}d  -i\left (\delta c + \ep\del d\right )\right ) -   2\ep a c\left (  b + \ep c   -i \delta c \right ) +2\ep^{2}abd\notag\\
&=& O(\ep^{3}+\ep\delta^{2}+\ep^{2}\delta+\del^{3}). \notag 
\end{eqnarray}
Since the product of any two terms in $\{\ep^{3}, \ep\delta^{2}, \ep^{2}\delta, \del^{3}\}$ is bounded by $(\ep^{2}+\delta^{2})^{3}$, we yield the continuity of $\rho_2$.
	 \end{proof}

\subsection{More on $N_{g_n} (U_C)$  and $N_{g_\infty} (U_C)$} Before moving on the next section to show the convergence of $g_n$ to $g_{\infty}$ and their number of roots, we will first show that $N_{g_n} (U_C)$ have uniformly bounded higher moments.
\begin{lemma}\label{lemma:moments:infty} There exists a constant $A$ such that the following holds. For any $\ell\ge 0$, we have
$$\E N_{g_n}^\ell(U_C) \le (A \ell)^{\ell}.$$
\end{lemma}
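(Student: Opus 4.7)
The strategy is Jensen's inequality plus moment estimates on both tails. Cover the strip $U_C$ by $K = O_C(1)$ closed disks $D(z_i, r)$ of a fixed small radius $r$, where each concentric disk $D(z_i, 2r)$ sits in a bounded enlargement of $U_C$. By Jensen's inequality on each $D(z_i, 2r)$, whenever $g_n(z_i) \neq 0$,
\[
N_{g_n}(D(z_i, r)) \le A_0 \log\frac{M_i}{|g_n(z_i)|}, \qquad M_i := \sup_{z \in D(z_i, 2r)} |g_n(z)|.
\]
Summing over $i$ and applying $(\sum a_i)^\ell \le K^{\ell-1} \sum a_i^\ell$ reduces the task to proving
\[
\max_i \E (\log^+ M_i)^\ell \le (A_1 \ell)^\ell, \qquad \max_i \E (\log^+ 1/|g_n(z_i)|)^\ell \le (A_2 \ell)^\ell
\]
uniformly in $n$; absorbing $K^\ell$ into the final constant gives the claim.

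For the upper tail, the bound $\E |g_n(z)|^2 = O_C(1)$ from Section \ref{sec:proof:ginf}, combined with the crude estimate $|g_n(z)| \le n^{-1/2} e^{O(C)} \sum_k |\xi_k|$ for $z$ in a neighborhood of $U_C$ and the $(2+\ep_0)$-moment hypothesis on $\xi_k$, yields a polynomial tail $\P(M_i > t) \le C_C\, t^{-(2+\ep_0)}$ for $t$ sufficiently large (after absorbing a Markov factor on $\sum_k |\xi_k|^{2+\ep_0}$). Direct integration then gives $\E (\log^+ M_i)^\ell \le (A_1 \ell)^\ell$.

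The lower tail is the delicate step, because a single evaluation $|g_n(z_i)|$ does not, under only $(2+\ep_0)$ moments, obey a clean anti-concentration bound $\P(|g_n(z_i)| \le s) \le O(s)$ uniformly in $n$. To bypass this I would adapt the double Taylor expansion trick of \cite{krishnapur2021number}: replace $|g_n(z_i)|$ in the Jensen bound by $\max_{1\le j \le m} |g_n(w_{i,j})|$ for a well-separated cluster of $m = m(\rho)$ auxiliary points $w_{i,j}$ inside $D(z_i, r/2)$ (Jensen adapts verbatim, up to a constant absorbed into $A_0$). On the event $\max_j |g_n(w_{i,j})| \le s$, Lagrange interpolation on $\{w_{i,j}\}$ forces the first $m$ Taylor coefficients of $g_n$ at $z_i$ --- each a linear combination of the $\xi_k$ with $\ell^2$-norm of order $1$ and uniformly non-degenerate variance --- to lie in a box of side $C_m s$. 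A quantitative Hal\'asz/Esseen-type anti-concentration estimate then gives $\P(\max_j |g_n(w_{i,j})| \le s) \le C_m s^{p}$ for some $p > 0$ uniform in $n$, and integrating in $s$ yields $\E (\log^+ 1/\max_j |g_n(w_{i,j})|)^\ell \le (A_2 \ell)^\ell$.

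The main obstacle will be the double Taylor step: one must choose the cluster $\{w_{i,j}\}$ so that the Vandermonde-type interpolation is well-conditioned uniformly in $n$, and verify that each of the first $m$ Taylor coefficients, viewed as a linear form in the $\xi_k$, has an anti-concentration rate independent of $n$ under only the $(2+\ep_0)$-moment assumption (no subgaussian tails are available). Once this anti-concentration is in hand, the rest of the argument is routine bookkeeping.
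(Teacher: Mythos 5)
Your Jensen's-formula route is a genuinely different decomposition from the paper's Hermite-interpolation-plus-double-Taylor argument, but as written it has two real problems. The first is a scope issue: the lemma is proved and invoked only for Gaussian coefficients --- it enters through Theorem \ref{theorem:distribution-moments}, whose hypothesis is iid $\mathcal N(0,1)$, and the paper's proof freely treats $g_n^{(j)}(c_i)$ as Gaussian; the extension to general $\xi_i$ happens later (Theorem \ref{theorem:moments:nongau-infty}) via the universality estimate of \cite{DOV}, not by reproving uniform moment bounds for non-Gaussian $g_n$. So your entire lower-tail discussion (Lagrange interpolation on an auxiliary cluster, Hal\'asz/Esseen anti-concentration under $(2+\ep_0)$-moments) is attacking a harder problem than the one actually posed: in the Gaussian case, \eqref{eq:devF} shows $\E|g_n(z_i)|^2=\Theta_\rho(1)$ and (for $\zeta\ne\pm1$) $\E g_n(z_i)^2=o(1)$, so $g_n(z_i)$ is a nearly isotropic complex Gaussian with $\P(|g_n(z_i)|\le s)=O(s^2)$, and the lower tail is immediate.

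The second problem is the upper tail, and this one is a genuine flaw in the stated reasoning. The bound $|g_n(z)|\le n^{-1/2}e^{O(C)}\sum_k|\xi_k|$ is a triangle inequality that discards the cancellation behind $\E|g_n(z)|^2=O_C(1)$, and it lives at scale $\sqrt n$: Markov together with the $(2+\ep_0)$-moment hypothesis gives $\P\left(\sum_k|\xi_k|\ge tn\right)\ll n\,t^{-(2+\ep_0)}$, which controls $n^{-1/2}\sum_k|\xi_k|$ only at thresholds of order $t\sqrt n$ and cannot produce the claimed $\P(M_i>t)\ll t^{-(2+\ep_0)}$ at fixed $t$. This is exactly the obstruction the paper flags --- the term $n^{-1/2}\sum_h|\xi_h|$ ``can be as large as $\sqrt n$'' --- and is the entire motivation for the double Taylor expansion \eqref{eq:double:T}. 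There is, however, a clean fix within your own framework: $|g_n|^2$ is subharmonic, so $M_i^2\le\frac{1}{\pi r^2}\int_{D(z_i,3r)}|g_n|^2\,dA$, hence $\E M_i^2=O_C(1)$ follows directly from the pointwise second-moment bound, and $\P(M_i>t)\ll t^{-2}$ already gives $\E(\log^+M_i)^\ell\le(A\ell)^\ell$ by integration. With that repair and the Gaussian simplification of the lower tail, your covering-plus-Jensen approach would indeed yield the lemma, and would arguably be more streamlined than the paper's variable-radius Hermite argument.
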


 \begin{proof}   Let $k\ge A \ell$ for some large constant $A$. We want to bound the probability that $N_{g_n}(U_C)\ge k$. We divide $U_C$ into $O(C\eta^{-2})$ (possibly overlapping) open balls $B_i=B(c_i,\eta)$ centered at $c_i$ of radius $\eta$, which is chosen to be sufficiently small. Then there exists $i$ such that $B_i$ contains at least $s = k \eta^{2}/C$ roots. Then by Hermite interpolation, as $g_n$ is analytic with probability one, we have 
	\begin{equation}\label{eqn:Hermite}
	|g_n(c_i)| \le \frac{1}{s!} \eta^{s} \sup_{z\in B(c_i, \eta)}|g_n^{(s)}(z)|.
	\end{equation}
	By Taylor expanding $g_n^{s}(z)$ around $c_i$, we obtain for any $m\ge 0$ (we later choose $m =\log n$),
	\begin{eqnarray}
	|g_n^{(s)}(z)|\le \sum_{j=s}^{s+m-1} \frac{|g_n^{(j)}(c_i)|}{(j-s)!} \eta^{j-s} +  \sup_{w\in B(c_i, \eta)}\frac{|g_n^{(s+m)}(w)|}{m!} \eta^{m}.\label{eq:double:T}
	\end{eqnarray}
	For each $j$, $g_n^{(j)}(c_i)$ is a Gaussian random variable with mean 0 and variance equals that of $\frac{1}{n^{j+\rho+1/2}} f_n^{(j)}(\zeta+\frac1n \zeta c_i)$, which is of order
	$$(1+o(1))\frac{1}{n^{2j+2\rho+1}} \sum_{h=j}^{n} h^{2}(h-1)^{2}\dots(h-j+1)^{2}a_{h, \rho, n}^{2} |\zeta+\frac1n \zeta c_i|^{2h-2j} = O_{C}(1)$$
	where we used $|\zeta+\frac1n \zeta c_i|^{j}\le (1+2C/n)^{n}\le e^{2C} = O_{C}(1)$. \\
	So, by Gaussianity, for all $M_j\ge 1$, 
	\begin{eqnarray}
	\P(|g_n^{(j)}(c_i)|\gg M_j)\le e^{-M_j}.\notag
	\end{eqnarray}
	Finally, for the supremum term, we observe 
		\begin{eqnarray*}
		\sup_{w\in B(c_i, \eta)}|g_n^{(s+m)}(w)| &\ll& \frac{1}{n^{s+m+\rho+1/2}}\sum_{h=s+m}^{n} h (h-1) \dots (h-s-m+1) |a_{h, \rho, n}| |\xi_h| (|c_i|+\eta)^{h}\\
		&\ll& \frac{1}{n^{1/2}}\sum_{h=1}^{n}  |\xi_h|.
	\end{eqnarray*}
Note that if we hadn't used another round of Taylor expansion in \eqref{eq:double:T} and just applied the above bound to $|g_n^{(s)}(z)|$ and take supremum, the term $n^{-1/2}\sum_{h=1}^{n}  |\xi_h|$, which can be as large as $\sqrt n$, would be too big to handle. Here, we performed \eqref{eq:double:T} so that the extra term $\eta^{m}/m!$ would swallow the $n^{-1/2}\sum_{h=1}^{n}  |\xi_h|$. Indeed, for an $M_0$ to be chosen,
$$\P(\sum_{h=1}^{n}  |\xi_h|\ge M_0 n)\le n\P(|\xi_h|\ge M_0)\le ne^{-M_0}.$$
Thus, 
$$\P(\sup_{w\in B(c_i, \eta)}|g_n^{(s+m)}(w)|\gg n^{1/2} M_0)\ll ne^{-M_0}.$$
Combining all of these events, we conclude that with probability at least $1 - ne^{-M_0} - \sum_{j=s}^{s+m-1} e^{-M_j}$, we have
$$\sup_{z\in B(c_i, \eta)}|g_n^{(s)}(z)|\ll \sum_{j=s}^{s+m-1} \frac{M_j}{(j-s)!} \eta^{j-s} +\frac{n^{1/2} M_0\eta^{m}}{m!}.$$
On this event, 
$$|g_n(c_i)| \ll  \frac{1}{s!} \eta^{s} \left (\sum_{j=s}^{s+m-1} \frac{M_j}{(j-s)!} \eta^{j-s} +\frac{n^{1/2} M_0\eta^{m}}{m!}\right )$$
which only happens with probability at most
$$O_{C}(1)\frac{1}{s!} \eta^{s} \left (\sum_{j=s}^{s+m-1} \frac{M_j}{(j-s)!} \eta^{j-s} +\frac{n^{1/2} M_0\eta^{m}}{m!}\right )$$
since $g_n(c_i)$ is a Gaussian random variable with variance $\Theta_{C}(1)$. All together, we get that the probability that $N_{g_n}^\ell(U_C)\ge k$ is at most (up to a constant depending on $C$),
$$\frac{C}{\eta^{2}}\left [\frac{1}{s!} \eta^{s} \left (\sum_{j=s}^{s+m-1} \frac{M_j}{(j-s)!} \eta^{j-s} +\frac{n^{1/2} M_0\eta^{m}}{m!}\right ) +\sum_{j=s}^{s+m-1} e^{-M_j}\right ]+ ne^{-M_0}$$
for any choice of $\eta$, $M_0, M_j$, with $s = k\eta^{2}/C$. For instance, we choose $s = 8\ell$, we get $\eta =  \frac{\sqrt {8 C\ell}}{\sqrt k}\le \sqrt{\frac{8C}{A}}$. By setting 
$$M_j = s\log \frac1\eta+\eta^{-(j-s)/2}, \quad M_0 = 2\ell \log k + \log n,$$
we obtain the tail probability of 
$$\frac{C}{\eta^{2}}\left [\frac{1}{s!} \eta^{s} \left (e^{\sqrt \eta} +\frac{n^{1/2} M_0\eta^{m}}{m!}\right ) +\eta^{s} \right ]+k^{-2\ell}.$$
Sending $m\to\infty$, the term with $m$ goes to $0$, so we end up with 
$$\eta^{s-2}+k^{-2\ell}\ll (C\ell)^{4\ell} k^{-4\ell+1}+k^{-2\ell}.$$
So, 
\begin{eqnarray}
\E N_{g_n}^{\ell}(U_C) &\ll& (A\ell)^{\ell} + \ell \sum_{k=A\ell}^{\infty} k^{\ell - 1} \P(N_{g_n}(U_C)\ge k)\notag\\
&\ll& (A\ell)^{\ell} +   \ell \sum_{k=A\ell}^{\infty} \left ((C\ell)^{4\ell} k^{-3\ell}+k^{-\ell-1}\right )\ll  (A\ell)^{\ell}\notag
\end{eqnarray}
as desired.

\end{proof}

\subsection{Convergence of $g_n$ to $g_{\infty}$ when $\xi_i$ are iid $\mathcal N(0, 1)$} \label{sec:proof:gU} Now, we prove \eqref{eq:conv:gn}. 
We first start with two simple results for the Gaussian models. 

\begin{lemma} With probability one, $g_n$ and $g_{\infty}$ do not have double roots in $U_C$.  
\end{lemma}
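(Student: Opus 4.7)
The plan is to handle the polynomial $g_n$ and the Gaussian analytic function $g_\infty$ separately: an algebraic argument via the discriminant for $g_n$, and a Bulinskaya-type covering argument driven by the non-degeneracy \eqref{eq:devF} for $g_\infty$.

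For $g_n$: Factor $f_{\rho,n}(x)=x^\rho P(x)$ where $P(x)=\sum_{i=\rho}^{n}a_{i,\rho,n}\xi_i x^{i-\rho}$. The affine map $z\mapsto \zeta+\zeta z/n$ is an injection of $U_C$ into $\C\setminus\{0\}$ for large $n$, so double roots of $g_n$ in $U_C$ are in bijection with double roots of $P$ in the image. The coefficients of $P$ are independent Gaussians with nonzero variances (since $a_{i,\rho,n}\ne 0$ for $i\ge\rho$), so the coefficient vector has a density with respect to Lebesgue measure on $\R^{n-\rho+1}$. The discriminant $\mathrm{disc}(P)$ is a polynomial in these coefficients that is not identically zero (e.g.\ $x^{n-\rho}+1$ has nonzero discriminant), hence its vanishing locus has Lebesgue measure zero, and therefore $\P(\mathrm{disc}(P)=0)=0$.

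For $g_\infty$: First, the pseudo-covariance $\E g_\infty(z)g_\infty(w)$ vanishes identically, since $B$ is a standard complex Brownian motion with $\E B(t)B(s)\equiv 0$; so $(g_\infty(z),g_\infty'(z))$ is a circularly symmetric complex Gaussian vector in $\C^2$. By \eqref{eq:devF}, the determinant $S(u)V(u)-T(u)^2=\Theta_\rho(1)$ is uniformly bounded below on $U_C$, so the joint density of $(g_\infty(z),g_\infty'(z))$ at $(0,0)\in \C^2$ is uniformly bounded by some $M$ on $U_C$. Partition $U_C$ into $O(C/\eps^2)$ squares of side $\eps$; if a square with center $c$ contains a double root $z_0$ of $g_\infty$, Taylor expansion at $z_0$ gives $|g_\infty(c)|\le R\eps^2$ and $|g_\infty'(c)|\le R\eps$, where $R:=\sup_{U_{C+1}}|g_\infty''|$. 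On the high-probability event $\{R\le M_1\}$ the uniform density bound yields $\P(|g_\infty(c)|\le M_1\eps^2,\,|g_\infty'(c)|\le M_1\eps)=O(M_1^4\eps^6)$; summing over $O(C/\eps^2)$ squares gives $O(\eps^4)$, which vanishes as $\eps\to 0$, and then sending $M_1\to\infty$ completes the argument.

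The main obstacle lies in the $g_\infty$ step; the polynomial case is routine measure theory once the trivial factor $x^\rho$ is extracted. For $g_\infty$ one must verify the uniform bound on the joint density of $(g_\infty,g_\infty')$ at the origin (which follows from \eqref{eq:devF} together with the vanishing pseudo-covariance), and control $\sup_{U_{C+1}}|g_\infty''|$ with high probability, obtainable via Cauchy's estimates from a pointwise $L^2$ bound on $g_\infty$ combined with standard Gaussian sup-norm concentration.
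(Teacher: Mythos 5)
Your proposal is correct, and for the $g_\infty$ half it takes a genuinely different route from the paper. For $g_n$, both you and the paper argue that the discriminant (resultant of the polynomial with its derivative) is a polynomial in the Gaussian coefficients whose zero set has measure zero; you explicitly peel off the deterministic factor $x^\rho$ and work with $P=x^{-\rho}f_{\rho,n}$, which is the cleaner formulation since for $\rho\ge2$ the discriminant of $f_{\rho,n}$ itself vanishes identically due to the root at the origin (the paper elides this). For $g_\infty$, the paper takes a shortcut it has already paid for: Lemma~\ref{lm:diag} gives $\rho_2$ bounded on $U_C^2$, so covering $U_C$ by $O(C\alpha^{-2})$ balls $B_i$ of radius $\alpha$ and writing
$$\P(N_i\ge2)\le\E N_i(N_i-1)=\int_{B_i\times B_i}\rho_2=O(\alpha^4)$$
gives $O(\alpha^2)$ after a union bound and the result follows letting $\alpha\to0$. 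Your Bulinskaya-type argument instead goes through the joint density of $(g_\infty(z),g_\infty'(z))$, bounded at the origin via \eqref{eq:devF} and the vanishing pseudo-covariance, together with a Taylor estimate requiring high-probability control on $\sup_{U_{C+1}}|g_\infty''|$. Both are valid; the paper's version is more economical precisely because the diagonal bound on $\rho_2$ has already been established for other purposes, while yours is more self-contained but offloads work onto the $g_\infty''$ sup-norm control and Cauchy estimates, which you sketch but do not carry out in detail.
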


\begin{proof} For $g_n$, if it has a double root then $f_n$ also has a double root. As this is a polynomial of degree $n$, if $f_n(z)$ and $f_n'(z)$ have common roots then the resultant must have zero determinant. But the resultant is a non-degenerate multivariate function of the Gaussians, so it is zero with probability zero. 

For $g_\infty(z)$, for any $\alpha>0$, we divide $U_C$ into $O(C\alpha^{-2})$ balls $B_i$ of radius $\alpha$. We will show that the probability there exists $i$ such that $N_i$, the number of zeros in $B_i$, is greater than $2$ is of order $O(\alpha^4)$, from which we see that the given probability will be bounded by $O(\alpha^2)$ after taking union bound. Indeed, using the boundedness of $\rho_2$ in Lemma \ref{lm:diag},
$$\P(N_i \ge 2) \le \E (N_i (N_i-1)) = \int_{B_i \times B_i} \rho_2(z_1,z_2) dz_1 dz_2 \le O( |B_i| \times |B_i|) = O(\alpha^4).$$ 
Sending $\alpha$ to $0$, we conclude that the probability that $g_{\infty}$ has double roots is 0.
\end{proof}

Our next simple result is the following.

\begin{claim}
With probability one, $g_n(z)$ and $g_{\infty}$ do not have roots on the boundary $\partial U_C$ of $U_C$. 
\end{claim}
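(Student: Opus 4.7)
The plan is to observe that $\partial U_C$ is a finite union of line segments in $\C$, hence a set of two-dimensional Lebesgue measure zero, and then to bound the expected number of zeros on $\partial U_C$ by the Kac--Rice first intensity formula. Under the iid Gaussian assumption of this subsection, both $g_n$ and $g_\infty$ are Gaussian analytic functions on a neighborhood of $\bar U_C$, so Kac--Rice applies to each.

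For $g_\infty$, the first intensity $\rho_1(z) = (V(u)S(u) - T(u)^2)/(\pi S(u)^2)$ was already shown in Section~\ref{sec:proof:ginfU} to be continuous and of order $\Theta_\rho(1)$ on $U_C$; shrinking $\delta$ at the outset by an arbitrarily small amount, the same estimate persists on a slight enlargement of $\bar U_C$. Hence
\[
\E\bigl[\#\{z \in \partial U_C : g_\infty(z) = 0\}\bigr] \;=\; \int_{\partial U_C} \rho_1(z)\, dz \;=\; 0,
\]
because the integrand is bounded and $\partial U_C$ has zero two-dimensional Lebesgue measure. Markov's inequality applied to the non-negative integer-valued random variable on the left then forces the event $\{g_\infty \text{ has a root on } \partial U_C\}$ to have probability zero.

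The argument for $g_n$ is identical. Its covariance kernel was computed in Section~\ref{sec:proof:ginf} and differs from the covariance of $g_\infty$ only by a $(1+o_n(1))$ factor, so its variance stays bounded away from zero on $\bar U_C$ for all $n$ large enough. Consequently its Kac--Rice density $\rho_{1,n}$ is continuous and bounded there, and the same measure-zero integration yields expectation zero. The only point to verify is that the variance does not degenerate on the closure, which follows immediately from \eqref{eq:devF} after the harmless shrinking of $\delta$; beyond this routine check there is no genuine obstacle to the argument.
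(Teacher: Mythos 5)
Your proposal matches the paper's argument: both invoke the Kac--Rice first-intensity formula together with the boundedness of $\rho_1$ near $\bar U_C$ (coming from \eqref{eq:devF}) to conclude that the expected number of zeros on the boundary is zero, and both treat $g_n$ and $g_\infty$ by the same reasoning. The only cosmetic difference is that the paper integrates $\rho_1$ over a shrinking open neighborhood $\partial U_C + B(0,\alpha)$ and sends $\alpha \to 0$, rather than integrating directly over the Lebesgue-null set $\partial U_C$, which sidesteps any quibble about applying Kac--Rice to a measure-zero set; otherwise the two arguments coincide.
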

\begin{proof} We will show for $g_\infty$ as the treatment for $g_n$ is similar. From \eqref{eq:devF}, we saw that for all $\alpha>0$ sufficiently small, $\rho_1(z) = O(1)$ for all $z \in U_C+B(0,\alpha)$. Let $N$ be the number of roots in $\partial U_C+B(0,\alpha)$, then 
$$\P(N \ge 1) \le \E N = \int_{\partial U_C+B(0,\alpha)} \rho_1(z)dz \le O_C(\alpha).$$
Sending $\alpha$ to $0$, we obtain the claim.
\end{proof}

Our treatment below is similar to \cite[Section 4]{IKM} where instead of real roots, we consider complex roots. First, let $\CH$ be the set of all analytic function on the entire complex plane. We endow $\CH$ with the topology of uniform convergence on the compact sets, which can be generated by the complete separable metric 
$$d(f,g) = \sum_k \frac{1}{2^k} \frac{\|f-g\|_{\bar{D}_k}}{1+ \|f-g\|_{\bar{D}_k}},$$
where $\bar{D}_k = \{z\in \C: |z| \le k\}$ and $\|f\|_K = \sup_{z\in K}|f(z)|$.
  
\begin{lemma}\label{lemma:Hurwitz}  Let $A_C$ be the set of all $f\in \CH$ which do not have multiple roots in $U_C$ and do not have roots over the boundary of $U_C$. Then the set $A_C$ is open.
\end{lemma}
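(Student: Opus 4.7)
The plan is to show that around each $f \in A_C$ there is an open neighborhood in $(\CH, d)$ that stays inside $A_C$, by applying Rouch\'e's theorem to count zeros and Cauchy's estimate to control derivatives under uniform convergence. Since $f \in A_C$ is not identically zero (else it would vanish on $\partial U_C$), its zeros $z_1,\dots,z_m$ in the compact set $\overline{U_C}$ are finite in number, and they all lie in the open strip $U_C$ and are simple, so $f'(z_j) \ne 0$ for each $j$.

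I would pick $\eta > 0$ small enough that (i) the closed disks $\overline{B(z_j, 2\eta)}$ are pairwise disjoint and contained in $U_C$, and (ii) $|f'(z)| \ge \kappa_1$ on their union for some $\kappa_1 > 0$. On the compact set $K := \overline{U_C} \setminus \bigcup_j B(z_j, \eta)$ the function $f$ has no zeros, so $\kappa_2 := \min_{z \in K} |f(z)| > 0$. Let $R$ be large enough that $\bar{D}_R$ contains $\overline{U_C}$ together with every $\overline{B(z_j, 2\eta)}$, and choose $\varepsilon > 0$ so small that $\varepsilon < \kappa_2$ and, via Cauchy's integral formula on $\partial B(z_j, 2\eta)$, the bound $\|g - f\|_{\bar{D}_R} < \varepsilon$ forces $\|g' - f'\|_{\overline{B(z_j, \eta)}} < \kappa_1/2$ (it suffices to take $\varepsilon < \eta \kappa_1 / 4$). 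Since $d$-convergence in $\CH$ agrees with uniform convergence on compact sets, the set $V := \{g \in \CH : \|g - f\|_{\bar{D}_R} < \varepsilon\}$ is open in $(\CH, d)$.

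For any $g \in V$: on $\partial U_C \subset K$ we have $|f| \ge \kappa_2 > \varepsilon > |g - f|$, so Rouch\'e's theorem yields that $g$ has exactly $m$ zeros in $U_C$ (counted with multiplicity) and none on $\partial U_C$. Applying Rouch\'e on each $\partial B(z_j, \eta) \subset K$ shows that $g$ has exactly one zero $w_j \in B(z_j, \eta)$; these $m$ zeros then exhaust the zeros of $g$ in $U_C$. At each $w_j \in \overline{B(z_j, \eta)}$ we have $|g'(w_j)| \ge |f'(w_j)| - \kappa_1/2 \ge \kappa_1/2 > 0$, so $w_j$ is simple. Hence $g \in A_C$, and $A_C$ is open.

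The main (minor) obstacle is that Rouch\'e's theorem only counts zeros with multiplicity and so does not by itself distinguish simple zeros from multiple ones; to upgrade to the simplicity statement required by $A_C$, one must separately control $g'$, which is achieved by Cauchy's estimate on the slightly larger disk $B(z_j, 2\eta)$ on which $|f'|$ is bounded below. The rest is routine bookkeeping about how the compact-open topology interacts with the single compact disk $\bar{D}_R$ containing every piece of $\overline{U_C}$ and the auxiliary disks around each $z_j$.
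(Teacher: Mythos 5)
Your proof is correct, and it is essentially the paper's proof with Hurwitz's theorem unpacked into Rouch\'e's theorem applied on small circles: the paper quotes Hurwitz as a black box for a convergent sequence, whereas you directly construct the sup-norm ball $V$ on $\bar D_R$ (which is indeed open in $(\CH,d)$) and run Rouch\'e by hand on $\partial U_C$ and on each $\partial B(z_j,\eta)$. One small point of diagnosis: the Cauchy-estimate step controlling $g'$ on $\overline{B(z_j,\eta)}$ is redundant, and your stated ``main obstacle'' is not actually an obstacle. Rouch\'e applied to $\partial B(z_j,\eta)$ already gives that $g$ has total multiplicity exactly $1$ inside $B(z_j,\eta)$, which is precisely the statement that $g$ has a single simple zero there; no separate control of $g'$ is needed to upgrade to simplicity. (Said differently, Rouch\'e does count with multiplicity, but once you have isolated each simple zero of $f$ in its own disk and the count there is $1$, the perturbed zero is forced to be simple.) This is also exactly what the paper relies on: Hurwitz gives $m_i=1$ zeros of $f_n$ in $z_i+D_\alpha$, hence a simple zero. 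Dropping the Cauchy estimate leaves a clean, correct proof identical in spirit to the paper's.
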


\begin{proof} This follows from Hurwitz's theorem. Indeed, consider a sequence $(f_n)_{n\in \N}$ in $\CH$, which converges to some $f\in A_C$ locally uniformly. We will show that $f_n\in A_C$ for sufficiently large $n$. Let $R>0$ be large such that $U_C \subset D_R =\{z: |z| <R\}$. Let $z_1,\dots, z_d$ be the collections of all zeros of $f$ in $D_R$ with multiplicities $m_1,\dots, m_d$. Let $\alpha>0$ be  sufficiently small such that the open disks $z_i + D_\alpha$ are disjoint, and do not intersect the boundary of the open sets $D_R$ and of $U_C$, except when $z_i$ are on one of these boundaries. By Hurwitz's theorem for sequence of (locally convergent) analytic functions, there exists $n_0$ such that for all $n\ge n_0$, $f_n$ has exactly $m_k$ zeros in $z_k + D_\alpha$. Now if $z_i \in U_C$, then as $f\in A_C$, we must have $m_i=1$, and $f_n$ has exactly one zero in $z_i + D_\alpha$. Thus, $f_n\in A_C$ for all $n\ge n_0$. 
\end{proof}

\begin{lemma}\label{lemma:cont-map} The mapping $f \to \mathcal Z_{U_C}(f)=\{z \in U_C: f(z)=0\}$ to the space of locally finite point measures 
 on $U_C$ endowed with the vague topology is continuous on $A_C$.
\end{lemma}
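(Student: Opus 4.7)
The plan is to deduce the continuity directly from Hurwitz's theorem, essentially upgrading the counting statement implicit in Lemma \ref{lemma:Hurwitz} to convergence of the zeros themselves. Fix $f \in A_C$ and a sequence $f_n \to f$ in $\CH$, and let $z_1, \dots, z_k$ be the zeros of $f$ in $U_C$, all simple by hypothesis. I will show that, once $n$ is large, $f_n$ has exactly one zero in a small disk around each $z_i$ and no other zero in $U_C$, and that these zeros converge to the $z_i$. Vague convergence of the zero measures is then immediate from the continuity of any test function $\phi \in C_c(U_C)$.

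First I would handle the boundary. Because $f \in A_C$, the continuous function $|f|$ is strictly positive on the compact set $\partial U_C$, so there exist $\alpha_0 > 0$ and $c > 0$ with $|f(z)| \ge c$ on the $\alpha_0$-neighborhood $V_{\alpha_0}$ of $\partial U_C$. Since $f_n \to f$ uniformly on the compact set $\overline{V_{\alpha_0}}$, we get $|f_n| \ge c/2$ on $V_{\alpha_0}$ for all large $n$, so no zero of $f_n$ can cross in or out of $U_C$ through $\partial U_C$.

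Next I would localize near each $z_i$. Choose $0 < \alpha < \alpha_0$ so small that the closed disks $\overline{B(z_i,\alpha)}$ are disjoint, contained in $U_C \setminus V_{\alpha_0/2}$, and each carries exactly one zero of $f$ counted with multiplicity. By Hurwitz's theorem applied on each $B(z_i,\alpha)$, there is $n_0$ such that for all $n \ge n_0$, $f_n$ has exactly one zero $z_i^{(n)} \in B(z_i,\alpha)$. On the remaining compact piece $K_\alpha := \overline{U_C} \setminus \bigl(V_{\alpha_0/2} \cup \bigcup_i B(z_i,\alpha)\bigr)$, the function $f$ is nowhere zero, hence bounded below in modulus, so uniform convergence forces $f_n$ to be nonvanishing on $K_\alpha$ for $n$ large. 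Combining the three regions, $f_n$ has exactly $k$ zeros in $U_C$, one in each $B(z_i,\alpha)$, for $n$ large enough.

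Finally I would conclude vague convergence. Letting $\alpha \to 0$ along a sequence $\alpha_m \to 0$ and choosing $n$ large enough at each stage gives $z_i^{(n)} \to z_i$ for each $i = 1,\dots,k$. For any $\phi \in C_c(U_C)$, continuity of $\phi$ yields
\[
\int \phi \, d\mathcal Z_{U_C}(f_n) = \sum_{i=1}^k \phi(z_i^{(n)}) \longrightarrow \sum_{i=1}^k \phi(z_i) = \int \phi \, d\mathcal Z_{U_C}(f),
\]
which is exactly vague convergence of point measures. The only subtle point is the boundary step, i.e.\ ensuring that no stray zero of $f_n$ can sneak in through $\partial U_C$ or escape from a neighborhood of some $z_i$; both are prevented precisely by the two defining conditions of $A_C$ (no zeros on $\partial U_C$, no multiple zeros in $U_C$), which is why the map is only guaranteed to be continuous on $A_C$ rather than on all of $\CH$.
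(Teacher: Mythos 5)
Your argument is correct and is essentially the route the paper takes: the paper's proof of Lemma \ref{lemma:cont-map} simply refers back to the Hurwitz-based argument of Lemma \ref{lemma:Hurwitz} and says to let the disk radius $\alpha\to 0$, which is exactly what you have spelled out (Hurwitz to localize one simple zero per shrinking disk, plus lower bounds on $|f|$ on the compact complement and near $\partial U_C$ to rule out stray zeros). One small point worth tightening: to guarantee $\overline{B(z_i,\alpha)}\cap V_{\alpha_0/2}=\emptyset$ you should take $\alpha<\alpha_0/2$ rather than merely $\alpha<\alpha_0$, and the existence of the tube $V_{\alpha_0}$ on which $|f|$ is bounded below uses compactness of $\partial U_C$ together with continuity of $|f|$ on a bounded neighborhood, which is fine but deserves a word.
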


\begin{proof} This also follows from Hurwitz's theorem with the same argument as in the proof of Lemma \ref{lemma:Hurwitz}, by letting the radius $\alpha$ tend to zero.
\end{proof}

\begin{lemma} We have the following weak convergence (of random elements with values in the metric space $\CH$) 
$$g_n \xrightarrow{w} g_\infty.$$
\end{lemma}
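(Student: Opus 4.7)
The plan is to combine convergence of all finite-dimensional distributions with tightness in the Fréchet space $(\CH,d)$, and then invoke Prokhorov together with the fact that the law of a centered complex-Gaussian analytic field is determined by its two covariance kernels.

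Because the $\xi_i$ are iid standard Gaussian, $g_n$ is itself a centered complex-Gaussian process; $g_\infty$ is one too, by construction as a Wiener integral against $B$. Their joint distributions at any finite collection of points $z_1,\dots,z_m\in\C$ are therefore determined by the pair of kernels $K(z,w)=\E g(z)\overline{g(w)}$ and $\tilde K(z,w)=\E g(z)g(w)$. The computations already carried out in Section~\ref{sec:proof:ginf} yielded $K_{g_n}(z,w)\to F^{(2\rho)}(z+\bar w)$ and $\tilde K_{g_n}(z,w)\to 0$ (this is where the hypothesis $\zeta\neq\pm 1$ enters), which match the covariances of $g_\infty$. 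Inspection of the explicit formulas and their error terms shows the convergence is uniform on compact subsets of $\C\times\C$, and hence every finite-dimensional distribution of $g_n$ converges to that of $g_\infty$.

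For tightness in $\CH$ with the topology of uniform convergence on compacta, the plan is to invoke Montel: a subset of $\CH$ is relatively compact iff it is uniformly bounded on every compact of $\C$. Via Chebyshev, tightness of the laws of $g_n$ then reduces to the uniform second-moment bound $\sup_n\E\sup_{|z|\le R}|g_n(z)|^2<\infty$ for every $R>0$. I would prove this by applying Cauchy's integral formula on the slightly larger circle $|w|=R+1$ and then Cauchy--Schwarz: since $|w-z|\ge 1$ whenever $|z|\le R$ and $|w|=R+1$,
\[
|g_n(z)|^2 \le C_R\oint_{|w|=R+1}|g_n(w)|^2\,|dw|,
\]
from which, after taking supremum in $z$ and then expectation,
\[
\E\sup_{|z|\le R}|g_n(z)|^2 \le C_R'\sup_{|w|=R+1}\E|g_n(w)|^2.
\]
The right-hand side is uniformly bounded in $n$ because $\E|g_n(w)|^2 \to F^{(2\rho)}(2\Re w)$ uniformly in $w$ on the compact circle $|w|=R+1$.

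Combining tightness with the finite-dimensional convergence, Prokhorov supplies, along any subsequence, a further subsequence converging weakly in $\CH$ to some random analytic function $\tilde g$. Evaluations at finite tuples of points are continuous functionals on $\CH$, so $\tilde g$ must have the same finite-dimensional distributions as $g_\infty$; being complex-Gaussian analytic with matching covariance kernels, $\tilde g\stackrel{d}{=}g_\infty$, and therefore $g_n\xrightarrow{w}g_\infty$. The step I expect to require the most care is the uniform sup-moment bound, because pointwise control of $\E|g_n(z)|^2$ does not by itself bound the supremum; the Cauchy-formula argument above (equivalently, subharmonicity of $|g_n|^2$) is the device that upgrades locally uniform convergence of covariances into the uniform boundedness on compacts needed for Montel.
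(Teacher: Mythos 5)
Your proof follows essentially the same route as the paper: finite-dimensional convergence is read off from the convergence of the two Gaussian covariance kernels computed in Section~\ref{sec:proof:ginf}, tightness in $\CH$ is reduced to a uniform second-moment bound, and Prokhorov's theorem then gives weak convergence. The one place you go further than the paper is in supplying the Cauchy-integral/subharmonicity step that upgrades the pointwise bound $\sup_n\sup_{|z|\le R}\E|g_n(z)|^2<\infty$ to control of $\E\sup_{|z|\le R}|g_n(z)|^2$ and hence, via Montel, to tightness — this is exactly the standard criterion for tightness of random analytic functions that the paper invokes implicitly, so your added detail is welcome but not a different method.
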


\begin{proof}  By Prokhorov's theorem, it suffices to verify convergence in finite dimensional and tightness. Let $z_1,\dots, z_k$ be complex numbers. We first observe that the convergence in distribution of the Gaussian vector $(g_n(z_1),\dots, g_n(z_k))$ to the Gaussian vector $(g_\infty(z_1),\dots, g_\infty(z_k))$ already follows from our previous computations verifying the convergences of $\E g_n(z_i) \overline{g_n(z_j)}$ and $\E g_n(z_i) g_n(z_j)$ to $\E g_\infty(z_i) \overline{g_\infty(z_j)}$ and $\E g_\infty(z_i) g_\infty(z_j)$, respectively.

We need to verify tightness, for this, it suffices to show that for any $R>0$, there exists $C_R < \infty$ such that
$$\sup_{n} \sup_{|z|\le R} \E |g_n(z)|^2 < C_R.$$
However, this is clear as $$\E |g_n(z)|^2 =  (1+o_n(1))  \frac{ \partial ^{2\rho}}{\partial z^{\rho}\partial \bar w^{\rho}}\frac{(1+\frac1n z)^{n+1}(1+\frac1n \bar w)^{n+1}-1}{n\left ((1+\frac1n z)(1+\frac1n \bar w)-1\right ) }\Big\vert_{w=z}.$$
\end{proof}

\begin{theorem}\label{theorem:distribution-moments} 
We have that $N_{g_n}(U_C) \to N_{g_\infty}(U_C)$ in distribution and for each $k\in \N$, $\lim_{n\to \infty} \E N_{g_n}^k(U_C) = \E N^k_{g_\infty}(U_C)$.
\end{theorem}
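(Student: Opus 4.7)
The plan is to combine the weak convergence $g_n \xrightarrow{w} g_\infty$ in the Polish space $\CH$, already established, with a Hurwitz-type continuity of the zero-counting map to deduce convergence in distribution of $N_{g_n}(U_C)$; and then to upgrade this to moment convergence by invoking the uniform bound $\E N_{g_n}^\ell(U_C) \le (A\ell)^\ell$ from Lemma \ref{lemma:moments:infty} to get uniform integrability.

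For the distributional statement, I would first apply Skorokhod's representation theorem in the Polish space $\CH$ to pass to a coupling on a common probability space in which $g_n \to g_\infty$ almost surely. The two preliminary results in Section \ref{sec:proof:gU} give that $g_\infty \in A_C$ almost surely (no double roots in $U_C$, no roots on $\partial U_C$). Since $A_C$ is open (Lemma \ref{lemma:Hurwitz}), $g_n$ eventually lies in any prescribed neighborhood of $g_\infty$ inside $A_C$. The key observation, slightly finer than Lemma \ref{lemma:cont-map} (which only supplies vague convergence of the zero point measures), is that since $g_\infty$ has no zeros on $\partial U_C$, one can choose a small $\alpha>0$ so that the $\alpha$-disks around the zeros of $g_\infty$ in $U_C$ lie inside $U_C$ while the $\alpha$-disks around zeros of $g_\infty$ outside $U_C$ remain outside. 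Hurwitz's theorem then forces $N_{g_n}(U_C) = N_{g_\infty}(U_C)$ for all $n$ sufficiently large almost surely in the coupling, which gives $N_{g_n}(U_C) \to N_{g_\infty}(U_C)$ in distribution.

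For the moment convergence, Lemma \ref{lemma:moments:infty} yields $\sup_n \E N_{g_n}^{k+1}(U_C) < \infty$, so the family $\{N_{g_n}^k(U_C)\}_n$ is bounded in $L^{(k+1)/k}$ and therefore uniformly integrable. Uniform integrability combined with the distributional convergence just established produces $\E N_{g_n}^k(U_C) \to \E N_{g_\infty}^k(U_C)$, while Fatou guarantees that the limit is itself finite. The main subtlety lies in the first part: one must verify that the vague convergence of zero measures coming from Lemma \ref{lemma:cont-map} promotes to equality of integer counts on the specific set $U_C$, and this is exactly why the absence of boundary zeros of $g_\infty$ was singled out as a separate preliminary lemma. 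No further estimates beyond the Gaussian computations and the moment bound already in place are needed.
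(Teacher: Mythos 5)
Your proposal is correct and follows essentially the same strategy as the paper: the same three ingredients (weak convergence of $g_n$ to $g_\infty$ in $\CH$, almost sure membership of $g_n$ and $g_\infty$ in $A_C$, and the uniform moment bound from Lemma~\ref{lemma:moments:infty}) are combined in the same order. The differences are purely presentational: you pass to an almost surely convergent coupling via Skorokhod and spell out the Hurwitz argument to get $N_{g_n}(U_C)=N_{g_\infty}(U_C)$ eventually, whereas the paper phrases this as an application of the continuous-mapping theorem through Lemma~\ref{lemma:cont-map} (whose proof is exactly that Hurwitz argument, so you are making explicit a step the paper leaves implicit); and for the moments you invoke $L^{(k+1)/k}$-boundedness to get uniform integrability and then apply the standard convergence-in-distribution-plus-UI theorem, whereas the paper carries out the equivalent truncation/Jensen/Markov estimate by hand.
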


\begin{proof} We have that $g_n \to g_\infty$ weakly, they are analytic and with probability one, they all belong to $A_C$. By Lemma \ref{lemma:cont-map}, the point process $\mathcal Z_{U_C}(g_n)$ converges to $\mathcal Z_{U_C}(g_\infty)$ weakly, and hence the number of zeros $N_{g_n}(U_C)$ converges to $N_{g_\infty}(U_C)$ in distribution. 
	In particular, for all $m\in \N$, $p_{n, m}:=\P(N_{g_n}(U_C) = m)\to \P(N_{g_\infty}(U_C) = m):= p_m$ as $n\to\infty$.
	By Fatou's lemma and Lemma \ref{lemma:moments:infty}, it holds for all $\ell\in \N$ that 
	\begin{equation}\label{eq:moment}
	\E N_{g_\infty}^\ell(U_C) \le \liminf_{n} \E N_{g_n}^\ell(U_C) \le (A \ell)^{\ell}.
	\end{equation}
	Fix $k\in \N$, we have for a large constant $M$,
	\begin{eqnarray*}
	&&\left |\E N_{g_n}^k(U_C) - \E N^k_{g_\infty}(U_C)\right |\\
	&\le&  \sum_{m=0}^{M-1} m^{k} |p_{nm} - p_m| + \E N_{g_n}^k(U_C) \textbf{1}_{N_{g_n} (U_C)\ge M} + \E N_{g_\infty} (U_C) \textbf{1}_{N_{g_\infty}^k(U_C)\ge M} \\
	&\le & \sum_{m=0}^{M-1} m^{k} |p_{nm} - p_m| + 2\sup_{\hat n}\left (\E N_{g_{\hat n}}^{2k}(U_C)\right )^{1/2} \P\left (N_{g_{\hat n}} (U_C)\ge M\right )^{1/2}\text{ by Jensen's inequality}\\
	&\le & \sum_{m=0}^{M-1} m^{k} |p_{nm} - p_m| + 2 (A k)^{k} \sqrt\frac{A}{ M}\text{ by \eqref{eq:moment} and Markov's inequality.}
	\end{eqnarray*}
	Letting $M$ and $n$ go to infinity, we obtain the convergence in moments.
\end{proof}

 \subsection{Convergence for the number of real roots}\label{sec:proof:uni} In this section, we prove \eqref{eq:uni}. In other words, we prove the following convergence of the number of roots $U_C$. Note that the random variables are not necessarily Gaussian here.
 
 The following generalizes Theorem \ref{theorem:distribution-moments} to non-Gaussian random variables.
  \begin{theorem}\label{theorem:moments:nongau-infty}
 	Let $C$ be a fixed positive number. For all $k\ge 0$, we have
 	$$\E N^{k}_{g_n}(U_C)\to \E N^{k}_{g_\infty}(U_C)$$
 	as $n\to\infty$.
 \end{theorem}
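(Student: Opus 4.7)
The plan is to mirror the proof of Theorem \ref{theorem:distribution-moments}, replacing the only Gaussian-specific step---the finite-dimensional weak convergence of $g_n$ to $g_\infty$---by a multivariate Lindeberg--Feller CLT. The three ingredients needed are: (i) the weak convergence $g_n \xrightarrow{w} g_\infty$ in $\CH$; (ii) continuity of the zero-counting map on a subset of $\CH$ of probability one under the law of $g_\infty$; and (iii) uniform integrability of $N_{g_n}^k(U_C)$. Ingredients (ii) and (iii) essentially recycle from the Gaussian argument: Lemmas \ref{lemma:Hurwitz} and \ref{lemma:cont-map} are purely analytic and make no distributional assumption, while Lemma \ref{lemma:moments:infty} (after adapting its Gaussian tail estimates, see below) still yields $\sup_n \E N_{g_n}^k(U_C) \le (Ak)^{k}$.

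For (i), I would first establish finite-dimensional convergence. Fix $z_1,\dots,z_m \in U_C$ and write
$$g_n(z_j) \;=\; \sum_{i=0}^{n} X_{i,j,n}, \qquad X_{i,j,n} \;=\; \frac{a_{i,\rho,n}}{n^{\rho+1/2}}\,\zeta^{i}\Big(1+\frac{\zeta z_j}{n}\Big)^{i}\xi_i.$$
The summands are independent, mean-zero, and satisfy $|X_{i,j,n}| \ll n^{-1/2}|\xi_i|$ uniformly in $i,j$ because $|1+\zeta z_j/n|^{i} \le (1+2C/n)^{n} = O_C(1)$. The covariances $\E g_n(z_i)\overline{g_n(z_j)}$ and $\E g_n(z_i) g_n(z_j)$ converge to those of $g_\infty$ by the computation already carried out in Section \ref{sec:proof:ginf}. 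For any fixed $\tau>0$, the Lindeberg condition
$$\sum_{i=0}^{n} \E |X_{i,j,n}|^{2}\mathbf{1}_{|X_{i,j,n}| > \tau} \;\le\; \tau^{-\ep_0}\sum_{i=0}^{n}\E |X_{i,j,n}|^{2+\ep_0} \;\ll\; \tau^{-\ep_0} n^{-\ep_0/2} \;\to\; 0$$
holds by the uniform $(2+\ep_0)$-moment hypothesis. Combining Cram\'er--Wold with the multivariate Lindeberg--Feller CLT (applied after splitting into real and imaginary parts) then gives $(g_n(z_j))_{j=1}^{m} \xrightarrow{d} (g_\infty(z_j))_{j=1}^{m}$. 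Tightness in $\CH$ transfers verbatim from the Gaussian case since its only input was the uniform variance bound $\sup_n \sup_{|z|\le R}\E|g_n(z)|^{2} < \infty$, which holds for any centered $\xi_i$ with unit variance.

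Now that $g_n \xrightarrow{w} g_\infty$ in $\CH$, and since $g_\infty \in A_C$ almost surely (established in Section \ref{sec:proof:gU} using only the distribution of the Gaussian limit), the continuous mapping theorem applied to Lemma \ref{lemma:cont-map} yields $N_{g_n}(U_C) \xrightarrow{d} N_{g_\infty}(U_C)$. The passage from distributional convergence to convergence of $k$-th moments then proceeds exactly as in the truncation argument at the end of the proof of Theorem \ref{theorem:distribution-moments}, using the uniform bound on $\E N_{g_n}^{2k}(U_C)$ supplied by (our extended version of) Lemma \ref{lemma:moments:infty}.

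The main obstacle is ensuring that Lemma \ref{lemma:moments:infty} really does apply in the non-Gaussian setting: its written proof uses a tail bound $\P(|g_n^{(j)}(c_i)| \gg M) \le e^{-M}$, which requires Gaussianity. I would replace these by polynomial tails $\P(|g_n^{(j)}(c_i)| \ge M) \ll M^{-(2+\ep_0)}$ obtained from Markov's inequality together with the uniform $O_C(1)$ bound on $\E|g_n^{(j)}(c_i)|^{2}$ (and, where the Hermite-interpolation argument needs higher moments, from analogous bounds on $\E|g_n^{(j)}(c_i)|^{2+\ep_0}$, which remain uniformly bounded under the moment hypothesis via a direct Minkowski-type estimate). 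The resulting bound on $\P(N_{g_n}(U_C) \ge k)$ is weaker but still decays fast enough in $k$ to preserve the conclusion $\E N_{g_n}^{\ell}(U_C) \le (A\ell)^{\ell}$, possibly with a larger constant $A$, which is all that Step (iii) requires.
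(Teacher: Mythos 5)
Your proposal takes a genuinely different route from the paper. The paper does not attempt a direct non-Gaussian CLT at all; instead it introduces the Gaussian companion $\tilde g_n$ and invokes the universality black-box \cite[Theorem 2.4]{DOV} to compare $k$-point linear statistics of $g_n$ and $\tilde g_n$ through a smooth test function $\varphi$ sandwiched between $\mathbf{1}_{(U_C)^k}$ and $\mathbf{1}_{(U_C+B(0,\alpha))^k}$, picking up a polynomial error $O_\alpha(n^{-c})$, and then applies the Gaussian result (Theorem \ref{theorem:distribution-moments}) to $\tilde g_n$. This transfers \emph{both} the finite-dimensional convergence and the moment bounds to the non-Gaussian model in one stroke, and in particular it never needs to reprove Lemma \ref{lemma:moments:infty} outside the Gaussian setting.

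Your route has a genuine gap in step (iii). You correctly identify the Gaussian tail bound $\P\left(|g_n^{(j)}(c_i)|\gg M\right)\le e^{-M}$ as something that needs replacing, but you overlook a second, more delicate use of Gaussianity in Lemma \ref{lemma:moments:infty}: the \emph{anti-concentration} step ``which only happens with probability at most $O_C(1)\tfrac{1}{s!}\eta^s(\cdots)$, since $g_n(c_i)$ is a Gaussian random variable with variance $\Theta_C(1)$.'' That step uses the bounded density of the Gaussian to get the small-ball estimate $\P\left(|g_n(c_i)|\le t\right)\ll t$ for arbitrarily small $t$. For a linear form $\sum_i c_i\xi_i$ with non-Gaussian $\xi_i$ (say Rademacher) and $|c_i|\asymp n^{-1/2}$, the best one can hope for from Berry--Esseen or Kolmogorov--Rogozin type inequalities is $\P\left(|g_n(c_i)|\le t\right)\ll t + n^{-c'}$ for some $c'>0$ depending on the moment assumption; the additive $n^{-c'}$ term cannot be removed because $g_n(c_i)$ can genuinely be atomic. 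In the lemma's proof the threshold $t\sim \eta^s/s!$ is made to decay \emph{with $k$} (since $\eta=\sqrt{8C\ell/k}$ and $s=8\ell$), so for $k$ large the threshold drops below $n^{-c'}$, the anti-concentration bound saturates at $n^{-c'}$, and the extra term $\frac{C}{\eta^2}\,n^{-c'}\sim \frac{k}{8\ell}\,n^{-c'}$ destroys the summability of $\sum_k k^{\ell-1}\P(N\ge k)$ over the range $k\lesssim n$. Replacing exponential by polynomial tails for $g_n^{(j)}(c_i)$ does nothing to fix this; the obstruction is the \emph{lower} tail of $|g_n(c_i)|$, not the upper tail. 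Without a separate anti-concentration input (which is itself a nontrivial ingredient in the universality literature), the claimed uniform bound $\sup_n\E N^\ell_{g_n}(U_C)\le(A\ell)^\ell$ is unproved, and with it the uniform integrability needed to upgrade $N_{g_n}(U_C)\Rightarrow N_{g_\infty}(U_C)$ to convergence of moments. The remaining pieces of your plan (Lindeberg CLT for finite-dimensional distributions, tightness from the uniform variance bound, and the continuous mapping theorem requiring only $g_\infty\in A_C$ a.s., which holds since $g_\infty$ is Gaussian) are sound.
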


Let $\tilde g_n$ be the version of $g_n$ when the random variables $\xi_i$ are iid standard Gaussian. By Theorem \ref{theorem:distribution-moments}, we have
\begin{equation}\label{eq:conv:gau}
\E N^{k}_{\tilde g_n}(U_C)\to \E N^{k}_{g_\infty}(U_C).
\end{equation}
We note that the same proof holds with $U_C$ replaced by $U_C+B(0, \alpha)$.
\begin{proof} Note that the number of roots of $g_n$ in $U_C$ is the same as the number of roots in the original function $f_n$ in the set $\zeta + \frac{1}{n} \zeta U_C$, by \eqref{def:gn}.  Let $\varphi$ be a test function approximating the indicator of $(U_C)^{k}$, in particular, we let $\varphi$ be a smooth function such that
\begin{eqnarray}
\textbf{1}_{(U_C)^{k}}\le \varphi \le  \textbf{1}_{(U_C+B(0, \alpha))^{k}} \label{eq:varphi}
\end{eqnarray}
and $|\triangledown^{a}\varphi(z)|\ll 1$ for all multi-indices $a$ with $0\le |a|\le 2k+4$.

By \cite[Theorem 2.4]{DOV} \footnote{or perhaps a slightly readable \cite[Theorem 4.3]{nguyenvurandomfunction17} which was written for the Kac polynomial but it holds also for the derivatives of the Kac polynomial.} applied to the function $G = \varphi$ and the centers $z_1=\dots= z_k = \zeta$, we get
\begin{eqnarray}
&&\left |\E \sum_{\zeta_{i_1}, \dots, \zeta_{i_k}\in \mathcal Z(g_n)} \varphi(n(\zeta_{i_1}/\zeta-1), \dots, n(\zeta_{i_1}/\zeta-1))\right . \notag\\
&&\left .- \E \sum_{\zeta_{i_1}, \dots, \zeta_{i_k}\in \mathcal Z(\tilde g_n)} \varphi(n(\zeta_{i_1}/\zeta-1), \dots, n(\zeta_{i_1}/\zeta-1))\right |
\ll  n^{-c}\notag
\end{eqnarray}
where $c>0$ is a small constant.  Here, we note that the transformation $z:= n(\zeta_{i}/\zeta-1)$ is just the inverse of the rescaling map $\zeta_i=\zeta + \frac{1}{n} \zeta z$ that brings the neighborhood of $\zeta$ to $U_C$. We note that when $\varphi$ is replaced by $\textbf{1}_{(U_C)^{k}}$, the term under the expectation becomes $N^{k}(U_C)$. So, we have from \eqref{eq:varphi} that
$$\E N^{k}_{g_n}(U_C) \le \E N^{k}_{\tilde g_n}(U_C +B(0, \alpha)) + O_{\alpha}(n^{-c}).$$
Using \eqref{eq:conv:gau}, we obtain
$$\limsup_{n\to\infty}\E N^{k}_{g_n}(U_C) \le \limsup_{n\to\infty} \E N^{k}_{\tilde g_n}(U_C +B(0, \alpha)) = \E N^{k}_{g_\infty}(U_C+B(0, \alpha)).$$
Sending $\alpha$ to 0, we obtain 
$$\limsup_{n\to\infty}\E N^{k}_{g_n}(U_C) \le \limsup_{\alpha\to 0}\E N^{k}_{g_\infty}(U_C+B(0, \alpha)) = \E N^{k}_{g_\infty}(U_C)$$
where the last equality follows from the dominated convergence theorem, knowing that $\E N^{k}_{\tilde g_\infty}(U_C+B(0, \alpha))<\infty$ for some $\alpha>0$ (by \eqref{eq:moment}).
Similarly, we get the reverse direction and conclude the proof.
\end{proof}

\subsection{Upper bound the hole radius for $g_{n}$}\label{sec:proof:Puni} In this section, we show the following theorem.
\begin{theorem}
	 The random variables $N_{g_n}(U_C)$ converges to $N_{g_{\infty}}(U_C)$ in distribution as $n\to\infty$. In particular, we have \eqref{eq:conv:nu}:
	 $$\P(N_{g_n}(U_C)=0)\to \P(N_{g_\infty}(U_C)=0).$$
\end{theorem}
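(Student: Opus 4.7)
The plan is to apply the classical method of moments for non-negative integer-valued random variables. By Theorem \ref{theorem:moments:nongau-infty}, we already have convergence of every integer moment: $\E N^k_{g_n}(U_C) \to \E N^k_{g_\infty}(U_C)$ for each $k \in \N$. Combining Lemma \ref{lemma:moments:infty} with Fatou's lemma (exactly as in the display \eqref{eq:moment}) gives the uniform bound $\E N^\ell_{g_\infty}(U_C) \le (A\ell)^\ell$ for all $\ell \ge 0$, with the same constant $A$ controlling both sides.

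The next step is to verify that the distribution of $N_{g_\infty}(U_C)$ is uniquely determined by its moment sequence via Carleman's condition. Writing $m_\ell := \E N^\ell_{g_\infty}(U_C)$, the bound above yields $m_\ell^{1/(2\ell)} \le (A\ell)^{1/2}$, so
$$\sum_{\ell=1}^{\infty} m_\ell^{-1/(2\ell)} \;\ge\; \sum_{\ell=1}^{\infty} (A\ell)^{-1/2} \;=\; +\infty,$$
and Carleman's theorem guarantees that the moments $\{m_\ell\}$ determine the law of $N_{g_\infty}(U_C)$ uniquely.

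The standard method-of-moments argument (Billingsley, \emph{Probability and Measure}, \S 30) then upgrades the convergence of moments to weak convergence: $N_{g_n}(U_C) \xrightarrow{d} N_{g_\infty}(U_C)$. Since both random variables take values in $\N \cup \{0\}$, weak convergence is equivalent to pointwise convergence of the mass functions, namely $\P(N_{g_n}(U_C) = m) \to \P(N_{g_\infty}(U_C) = m)$ for every $m \in \N$ (here $m = 0$ is trivially a continuity point of the limiting CDF). Specialising to $m = 0$ gives \eqref{eq:conv:nu}, as desired.

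The main obstacle has already been dispatched in the preceding subsections: the delicate uniform moment bound of Lemma \ref{lemma:moments:infty} supplied by the double Taylor expansion, and the universality-type moment transfer of Theorem \ref{theorem:moments:nongau-infty}. Once these two ingredients are in hand, the passage from moment convergence to distributional convergence (and hence to the hole probability at $m = 0$) is entirely routine; the only point requiring a sanity check is Carleman's condition, which is verified above.
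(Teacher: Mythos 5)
Your proof is essentially identical to the paper's: convergence of moments (Theorem \ref{theorem:moments:nongau-infty}), the bound $\E N_{g_\infty}^\ell(U_C) \le (A\ell)^\ell$ from \eqref{eq:moment}, Carleman's criterion for moment-determinacy, and then the standard method of moments to upgrade to convergence in distribution. The only slip is your parenthetical claim that $m=0$ is a continuity point of the limiting CDF --- it is not, since $\P(N_{g_\infty}(U_C)=0)>0$; the correct route is to note that any $t\in(0,1)$ \emph{is} a continuity point, and $\P(N_{g_n}(U_C)=0)=\P(N_{g_n}(U_C)\le t)\to\P(N_{g_\infty}(U_C)\le t)=\P(N_{g_\infty}(U_C)=0)$ because both random variables are supported on the non-negative integers. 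This is cosmetic and does not affect the validity of the argument.
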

Here, we recall that since the random variables $N_{g_n}(U_C)$ are discrete random variables supported on $\N$, convergence in distribution means convergence of the probability density $\P(N_{g_n}(U_C)=i)$, as $i$ varies. 
\begin{proof} By Theorem \ref{theorem:moments:nongau-infty}, $N_{g_n(U_C)}$ converges to $N_{g_\infty}(U_C)$ in moments. By \ref{eq:moment} and the Carleman's criteria, $N_{g_\infty} (U_C)$ is uniquely determined by its moments. Thus, we infer that $N_{g_n(U_C)}$ converges to $N_{g_\infty}(U_C)$ in distribution.
\end{proof}

\section{Proof of Theorem \ref{thm:main}: Lower bound}\label{sec:proof:lb}

We want to show that there exists $c$ such that
\begin{equation}\label{key}
\E(N_{f_n}(B(\zeta, c/n))) \le \ep.
\end{equation}
Without loss of generality, we assume that $\ep<1/100$ and $c< \ep$.

The first step is to reduce to the Gaussian case, via universality results. Consider the Gaussian version of $f_n$,  
$$\tilde f_n = \tilde f_{\rho, n} = \sum_{i=0}^{n} a_{i, \rho, n} \tilde \xi_i z^i$$
where $\tilde \xi_i$ are iid standard Gaussian.

Let $G$ be a smooth function such that approximates the indicator of the ball, or more specifically, $\textbf{1}_{B(\zeta, c/n)}\le G\le \textbf{1}_{B(\zeta, 2c/n)}$ and $||\triangledown^{a}G||_{\infty} = O(n^{a})$ for all $a\le 3$. We now apply a universality property of $f_n$ established in \cite[Theorem 2.3]{DOV}. This theorem applied to the function $G$ states that the linear statistics $\E \sum_{w\in \mathcal Z(f_n)} G(w)$ is universal, i.e.,
$$\E \sum_{w\in \mathcal Z(f_n)} G(w) - \E \sum_{w\in \mathcal Z(\tilde f_n)} G(w)\ll n^{-\gamma}$$
for a constant $\gamma$ independent of $n$ and $\zeta$.

Using this, we obtain
\begin{eqnarray}
\E(N_{f_n}(B(\zeta, c/n))) &\le& \E \sum_{w\in \mathcal Z(f_n)} G(w) =  \E \sum_{w\in \mathcal Z(\tilde f_n)} G(w) + o(1) \notag\\
&\le& \E( N_{\tilde f_n}(B(\zeta, 2c/n))) +o(1).\notag
\end{eqnarray}
Thus, it suffices to  prove that
\begin{equation}\label{eq:lb}
\E( N_{\tilde f_n}(B(\zeta, 2c/n))) \le \ep/2.
\end{equation}

In other words, it suffices to prove for the Gaussian case. To this end, we let $B_c = B(0, 2c)$ and definite the functions $g_n$ and $g_\infty$ as before. We apply the Kac-Rice formula to $g_{\infty}$ to get
\begin{eqnarray}
\E N_{g_\infty}(B_{c}) &=& \int_{B_c}\rho_1(z). \notag
\end{eqnarray}
By \eqref{eq:devF}, we have for all $z\in B_c$, $ \rho_1(z) \ll 1.$
Thus, 
$$\E N_{g_\infty}(B_{c}) \ll c^{2}\le \ep/4.$$
By the same argument as for $U_C$ (noting that $B_c\subset U_C$ for small $c$ and for $C\ge c$), we obtain the same limit as in Theorem \ref{theorem:distribution-moments}. So, we get
$$\lim_{n\to\infty} \E N_{g_n}(B_{c}) = \E N_{g_\infty}(B_{c}) \le \ep/4.$$
So, by choosing $n$ to be sufficiently small, we obtain \eqref{eq:lb} as desired.

\section{Acknowledgment} We thank Manjunath Krishnapur for suggesting helpful references on correlation functions.
  
 \bibliographystyle{plain}
 \bibliography{polyref}
 
\end{document}